\journal{B}	
\numberwithin{equation}{section}
\newtheorem{theorem}{Theorem}[section]
\newtheorem{lemma}[theorem]{Lemma}
\newtheorem{remark}{Remark}[section]
\newtheorem{assumption}{Assumption}[section]
\newcommand\diff{\,\mathrm{d}}
\newcommand{\Rmnum}[1]{\expandafter\@slowromancap\romannumeral #1@}
\begin{document}
	
	\begin{frontmatter}
		
		
		
		\title{High-order mass conserving, positivity plus energy-law preserving schemes and their error estimates for Keller-Segel equations \tnoteref{t1}}
		
		\tnotetext[label1]{This work is partially supported by the Natural Science Foundation of Chongqing
			(No. CSTB2024NSCQ-MSX0221), the National Natural Science Foundation of China
			(No. 12101178), the Natural Science Foundation of Shandong Province (ZR2024QA139).}
		
		\author[label1]{Mingmei Chen}
		\author[label1]{Kun Wang}
		\address[label1]{College of Mathematics and Statistics, Chongqing University, , Chongqing 401331, P.R. China}
		
		\author[label2]{Cong Xie}
		\address[label2]{School of Mathematics and Big Data, Jining University, Jining 273199, China}
	
	\begin{abstract}
		Chemotaxis plays a significant role in numerous physiological processes. The Keller-Segel equation serves as a mathematical model for simulating the phenomenon of cell population aggregation under chemotaxis, possessing physical properties such as mass conservation, positivity of density, and energy dissipation. High-order linear and decoupled schemes for the parabolic-parabolic Keller-Segel chemotaxis model are proposed in this paper,  which satisfy the three physical properties mentioned earlier. Firstly, by applying a logarithmic transformation, the Keller-Segel model is reformulated into its equivalent form that maintains the positivity of cell density regardless of the discrete scheme. Based on this equivalent system, we then propose high-order linear and decoupled numerical schemes using the backward differentiation formula (BDF). Furthermore, through the incorporation of a recovery technique and an energy-law preservation correction (EPC), we ensure that these schemes maintain mass conservation and preserve the original energy-law. Finally, we conduct a rigorous optimal error analysis for the numerical schemes under certain assumptions regarding the regularity of solutions, and some  numerical experiments are also presented to demonstrate their effectiveness.
	\end{abstract}
	
	\begin{keyword}
		Keller-Segel model \sep High-order schemes \sep Mass conservation \sep Positivity preservation \sep Energy-law preservation correction \sep Optimal error
		
		\MSC[2020] 65M06 \sep 35Q35 \sep 92C17 \sep 65N12
		
	\end{keyword}
	\journal{XXX}
\end{frontmatter}

	
	
	\section{Introduction}
	\label{sec1}
	Chemotaxis, which plays a vital part in several physiological mechanisms, refers to the phenomenon in which microorganisms sense chemical stimuli signals in the external environment and migrate either towards or away from the source of stimulation. The chemotaxis model was first introduced by Patlak~\cite{bib01} in the 1950s. Subsequently, in the 1970s, Keller and Segel~\cite{bib02,bib03,bib04} established the Keller-Segel chemotaxis model, which reveals the changing process between cell or microbial population density and chemoattractant concentration. Our primary focus in this work is on the parabolic-parabolic Keller-Segel chemotaxis model:
	\begin{subequations}\label{eq.1.1}
		\begin{align}
			&\varepsilon \partial_t c =\varDelta c-\alpha c+\beta\rho, &\mathrm{in}\ \Omega\times (0,T],\label{eq.1.1a}\\
			&\partial_t \rho =\varDelta \rho - \nabla \cdot(\gamma\rho\nabla c), &\mathrm{in}\ \Omega\times (0,T],\label{eq.1.1b}\\
			&\frac{\partial c}{\partial \nu}=0, \quad\frac{\partial \rho}{\partial \nu}-\gamma\rho\frac{\partial c}{\partial \nu}=0, &\mathrm{on}\ \partial\Omega\times (0,T], \label{eq.1.1c}\\
			&c(\mathbf{x},0)=c_0(\mathbf{x}),\quad \rho(\mathbf{x},0)=\rho_0(\mathbf{x}), &\mathrm{in}\ \Omega,\label{eq.1.1d}
		\end{align}
	\end{subequations}
	where $\Omega\subset\mathbb{R}^2$ is a bounded domain with a sufficiently smooth boundary $\partial\Omega$, and $T>0$ is a finite time. The unknown functions $\rho=\rho(\mathbf{x}, t)=\rho(x, y, t)$ and $c=c(\mathbf{x}, t)=c(x, y, t)$ represent the cell density function and the chemoattractant concentration function, respectively. $\rho_0(\mathbf{x})>0$ and $c_0(\mathbf{x})>0$ are given initial data. The parameters $\varepsilon$, $\alpha$, $\beta$ and $\gamma$ are all positive. Specifically, $\varepsilon$ describes the response rate of the chemoattractant concentration to the cell density, $\alpha$ is the reaction coefficient of the chemoattractant, $\beta$ denotes the growth rate of the cells, and $\gamma$ is the chemotactic sensitivity coefficient related to the chemoattractant concentration. Additionally, $\nu$ is the unit outward normal vector to the boundary $\partial\Omega$.
	
	Significantly, the system \eqref{eq.1.1} possess several essential properties:
	
	1. Mass conservation:
	\begin{equation}
		\int_\Omega\rho(\mathbf{x},t)\diff\mathbf{x}=\int_\Omega\rho(\mathbf{x},0)\diff\mathbf{x}.\label{eq.1.2}
	\end{equation}
	
	2. Positivity:
	\begin{equation}
		\rho(\mathbf{x},t)>0.\notag
	\end{equation}
	
	3. Energy dissipation law:
	\begin{equation}
		\frac{\diff E(\rho,c)}{\diff t}=-\int_{\Omega}\rho|\nabla(\log\rho-c)|^2+\varepsilon|\partial_t c|^2\diff \mathbf{x},\label{eq.1.3}
	\end{equation}
	where the free energy $E$ is defined as
	\begin{equation}
		E(\rho,c)=\int_{\Omega}\rho\log\rho-\rho-\rho c+\frac12\alpha c^2+\frac12|\nabla c|^2\diff\mathbf{x}.\label{eq.1.4}
	\end{equation}
	If the chemoattractant diffuses much faster than organisms, the appropriate mathematical model to represent this scenario is to set $\varepsilon=0$. In such case, we refer to the system of equations described by \eqref{eq.1.1} as the parabolic-elliptic system. This specific system has also attracted considerable attention and has been widely investigated in the existing literature. For further reading and detailed analysis, one may refer to \cite{bib05,bib06,bib07,bib08}. We consider the case with $\varepsilon=\alpha=\beta=\gamma=1$ in this research.
	
	Researchers have extensively investigated numerous analytical results concerning the solutions of the classical Keller-Segel model, including those related to existence, uniqueness, boundedness, and blow-up phenomena~\cite{bib09,bib10}. For example, Calvez and Corrias in \cite{bib11} derive a critical mass threshold, beneath which solutions are guaranteed to exist globally. It is shown in \cite{bib12} that when the initial mass of cells exceeds a critical value, blow-up occurs within finite time. Certainly, it does not occur in real scenarios, which just indicates that the organisms will gather in a very small area. To prevent the occurrence of non-physical blow-up, a range of modifications to the classical model have been introduced. One can refer to \cite{bib13,bib14,bib15,bib16} and the references therein for more theoretical results.
	
	At the same time, many numerical methods for the Keller-Segel model have attracted extensive attention and research in recent years. Among them are the finite difference method (FDM), referenced in \cite{bib05}, the finite volume method (FVM) as studied in \cite{bib17,bib18,bib19,bib20}, the finite element method (FEM) explored in \cite{bib21,bib22,bib23}, the discontinuous Galerkin method (DGM) discussed in \cite{bib24,bib25,bib26,bib33}, as well as other approaches detailed in \cite{bib27,bib28,bib29}.
	
	When designing numerical schemes for the Keller-Segel model, one of the challenges is to maintain the physical properties of the continuous model at the discrete level. Numerous studies have been dedicated to addressing this issue. For example, A moving mesh finite element method that preserves positivity was presented in \cite{bib22}. The work in \cite{bib30} proposed a new upwind DGM designed to maintain mass conservation, positivity, and energy stability properties. Shen and Xu~\cite{bib31} presented a fully discrete scheme by reformulating the term $\varDelta\rho$ as $\nabla\cdot(\frac1{f''(\rho)}\nabla f'(\rho))$ with $f''(\rho)=\frac1{\rho}$. They proved that the first-order scheme possesses properties including mass conservation, bound preservation, uniquely solvability, and energy dissipation, while the second-order scheme retains the first three of these properties. In \cite{bib32}, a new method for constructing high-order schemes that preserve positivity or bound and ensure unconditionally energy stable was proposed. This method combines the scalar auxiliary variable (SAV) approach with function transformation. Ding, Wang and Zhou~\cite{bib38} proposed a second-order scheme employing a modified Crank-Nicolson approach. They proved that this scheme preserves essential structural properties, including unique solvability, positivity preservation, mass conservation, and the original energy dissipation. Additionally, an estimate of the optimal convergence rate is given for the proposed method. Xu an Fu~\cite{bib16} construct a decoupled linear, block-centered finite difference method, demonstrating its mass conservation and second-order temporal and spatial convergence. In a separate study, Wang, Liu and Feng~\cite{bib34} applied a log-transformation to ensure positivity preservation, and then utilized a recovery technique to maintain mass conservation. They proved the optimal convergence order in the $L^2$-norm for a first-order scheme. But, how to design higher-order schemes preserving the original energy-law  and prove their optimal convergence order is still a challenging problem.
	
	In this paper, we propose high-order linear and decoupled numerical schemes that not only conserve mass and maintain positivity, but also preserve the original energy-law for the model~\eqref{eq.1.1}. Following the approach in \cite{bib34}, this work initially uses a log-transformation to the cell density to ensure positivity and rewrites the system~\eqref{eq.1.1} in an equivalent form. Then, high-order temporal discrete schemes for the equivalent model \eqref{eq.2.3} are constructed using the backward differentiation formula (BDF) method, combined with a recovery technique to ensure mass conservation. Additionally, we construct an energy-law preservation correction (EPC) technique, which enables us to ensure that the original energy-law of system \eqref{eq.1.1} is preserved. Finally, under the Assumptions~\ref{as.2.1} and \ref{as.3.1}, we establish the following optimal convergence order for the $k$th-order  scheme $(k=1, \cdots, 5)$:  for any $n\ge k$, there holds
	\begin{align*}
		&\|c(t_n)-c^n\|_{L^2(\Omega)}+\|\rho(t_n)-\rho^n\|_{L^2(\Omega)}\\
		&+\|\nabla (c(t_n)-c^n)\|_{L^2(\Omega)}+\|\nabla(\rho(t_n)-\rho^n)\|_{L^2(\Omega)}\le C\tau^k,
	\end{align*}
	where $\tau$ denotes the time step, $C$ is general positive constant, and $c^n$ and $\rho^n$ represent the approximation solutions of $c(t_n)$ and $\rho(t_n)$, respectively.
	
	The paper is organized as follows. In Section \ref{sec.2}, we introduce some essential notations and key inequalities, present the equivalent equation corresponding to equation \eqref{eq.1.1}, and establish some necessary assumptions for the following analysis. In Section \ref{sec.3}, we propose $k$th-order numerical schemes for the equivalent system. These schemes are mass conservative, positivity plus original energy-law preserving. Additionally, this section presents the main results concerning the $k$th-order convergence of these schemes. In Section \ref{sec.4}, we rigorously prove the theorem on the convergence order through the application of mathematical induction. Section \ref{sec.5} offers numerical examples to validate the derived theoretical results. Finally, Section \ref{sec.6} is devoted to some final remarks.
	
	\section{Preliminaries}\label{sec.2}
	Below, we first introduce a number of notations and inequalities that will be frequently used in the subsequent sections. Following this presentation, we reformulate the Keller-Segel model into an equivalent system by employing a logarithmic transformation.
	
	In the sequel, $C$ (whether subscripted or not) denotes a general positive constant. Although this constant may differ in various contexts, it consistently remains independent of the time step size $\tau$. We omit the dependence on $\mathbf{x}$ for all functions when there is no confusion.
	
	For $1\le p\le+\infty$ and $m\in\mathbb{N}^+$, we represent the Lebesgue space with $L^p(\Omega)$ and the Sobolev space with $W^{m,p}(\Omega)$. $\|\cdot\|_{L^p}$ and $\|\cdot\|_{W^{m,p}}$ denote the norms in these spaces, which are defined in the classical manner. In the special case where $p=2$, the Sobolev space $W^{m,2}(\Omega)$ is written as $H^m(\Omega)$, with its norm denoted by $\|\cdot\|_m$. In particular, $\|\cdot\|_0$ signifies the norm in $L^2(\Omega)$. For simplicity, $\|\cdot\|_\infty$ represents $\|\cdot\|_{L^\infty}$, and $(\cdot,\cdot)$ denotes the inner product in $L^2(\Omega)$.
	
	For simplicity, we define
	\begin{align*}
		a(u,v)&=(\nabla u,\nabla v),\quad\forall u,v\in H^1(\Omega),\\
		b(u,v,w)&=(\nabla u\cdot\nabla v,w),\quad\forall u,v\in H^1(\Omega), w\in L^2(\Omega).
	\end{align*}
	By applying the Hölder's inequality, one can derive the following inequalities for the previously defined trilinear term $b(\cdot,\cdot,\cdot)$ (see \cite{bib34}):
	\begin{subequations}\label{eq.2.1}
		\begin{align}
			|b(u,v,w)| \le \|\nabla u\|_0 \|\nabla v\|_{L^4} \|w\|_{L^4},\label{eq.2.1a}\\
			|b(u,v,w)| \le \|\nabla u\|_{L^4} \|\nabla v\|_0 \|w\|_{L^4},\label{eq.2.1b}\\
			|b(u,v,w)| \le \|\nabla u\|_{L^4}\|\nabla v\|_{L^4}\|w\|_0\label{eq.2.1c}.
		\end{align}
	\end{subequations}
	Moreover, there hold the Gagliardo-Nirenberg inequality and Agmon's inequality \cite{bib35}:
	\begin{subequations}\label{eq.2.2}
		\begin{align}
			\|w\|_{L^4}&\le C \|w\|^{\frac12}_0 \|\nabla w\|^{\frac12}_0, \quad\forall w \in H^1(\Omega),\label{eq.2.2a}\\
			\|w\|_\infty&\le C \|w\|^{\frac12}_0 \|\varDelta w\|^{\frac12}_0,\quad\forall w\in L^{\infty}(\Omega) \cap H^2(\Omega).\label{eq.2.2b}
		\end{align}
	\end{subequations}
	To keep the positivity of the cell density $\rho$, we introduce a new variable $u=\log(\rho)$ and, by substituting it into the original equation \eqref{eq.1.1}, obtain its equivalent form as follows:
	\begin{subequations}\label{eq.2.3}
		\begin{align}
			&\partial_t c =\varDelta c-c+\rho, &\mathrm{in}\ \Omega\times(0,T],\label{eq.2.3a}\\
			&\partial_t u=\varDelta u+\nabla u\cdot\nabla u-\nabla u\cdot\nabla c-\varDelta c,&\mathrm{in}\ \Omega\times(0,T],\label{eq.2.3b}\\
			&\rho=\exp(u),&\mathrm{in}\ \Omega\times(0,T],\label{eq.2.3c}\\
			&\frac{\partial c}{\partial\nu}=\frac{\partial u}{\partial\nu}=0,&\mathrm{on}\ \partial\Omega\times(0,T],\label{eq.2.3d}\\
			&c(\mathbf{x},0)=c_0(\mathbf{x}),u(\mathbf{x},0)=u_0(\mathbf{x})=\log(\rho_0(\mathbf{x})),\quad &\mathrm{in}\ \Omega.\label{eq.2.3e}
		\end{align}
	\end{subequations}
	From the equivalent form provided above, it becomes evident that the cell density $\rho$ has been transformed into an exponential function, ensuring that $\rho$ remains positive irrespective of the numerical method employed for its computation. Furthermore, in contrast to commonly used positivity preserving numerical methods, this method is independent of the discrete scheme and avoids the introduction of any nonlinear stabilizing term. Consequently, it simplifies the schemes and facilitates the derivation of optimal convergence orders in the subsequent section.
	
	To deduce error estimates, we make the following assumptions concerning $(c_0,\rho_0)$ and the exact solution $(c,u)$ of equation \eqref{eq.2.3}.
	\begin{assumption}\label{as.2.1}
		Assume that $\rho_0>0$ and the initial mass, given by  $\int_{\Omega}\rho_0\diff\mathbf{x}$, is below a specific critical threshold, ensuring that the system will not experience finite-time blow-up. Additionally, the initial value $c_0$ satisfies $\int_{\Omega}c_0\diff\mathbf{x}>0$. And for any $t\in (0,T]$, the exact solution $(c,u)$ of equation \eqref{eq.2.3} satisfies the following regularity condition:
		\begin{align*}
			&\|c_0\|_2^2+\|c(t)\|_2^2+\|c_t(t)\|_0^2+\sum_{i=1}^{k+1}\bigg(\int_0^t\|\partial_{\tilde{t}}^ic(\tilde{t})\|_0^4+\|\nabla\partial_{\tilde{t}}^ic(\tilde{t})\|_0^4+\|\varDelta\partial_{\tilde{t}}^ic(\tilde{t})\|_0^4\diff \tilde{t}\bigg)\\
			&+\|u_0\|_2^2+\|u(t)\|_2^2+\sum_{i=1}^{k+1}\bigg(\int_0^t\|\partial_{\tilde{t}}^iu(\tilde{t})\|_0^4+\|\nabla\partial_{\tilde{t}}^iu(\tilde{t})\|_0^4\diff\tilde{t}\bigg)+\int_0^t\|\varDelta\partial_{\tilde{t}}^ku(\tilde{t})\|_0^2\diff\tilde{t}\le C.
		\end{align*}
	\end{assumption}
	Moreover, we can derive from \eqref{eq.2.2}, \eqref{eq.2.3c} and Assumption ~\ref{as.2.1} that
	\begin{align}
		\|\rho_0\|_2^2+\|\rho(t)\|_2^2+\sum_{i=1}^{k+1}\bigg(\int_0^t\|\partial_{\tilde{t}}^i\rho(\tilde{t})\|_0^4+\|\nabla\partial_{\tilde{t}}^i\rho(\tilde{t})\|_0^4\diff\tilde{t}\bigg)\le C.\label{eq.2.4}
	\end{align}
	
	\section{Schemes and main results}\label{sec.3}
	In this section, we develope high-order schemes for the equivalent model \eqref{eq.2.3} using BDF, incorporating a recovery technique and an EPC, thereby ensuring that these schemes are mass conservative and original energy-law preserving. The main results concerning the $k$th-order convergence accuracy are presented in Theorem \ref{th.3.1}.
	
	Let $N\in\mathbb{N}^+$ and consider a partition of $[0,T]$ such that $0=t_0\le t_1\le\cdots\le t_n\le t_{n+1}\cdots\le t_N=T$, where $\tau=T/N$ is a fixed time step size, and $t_n=n\tau$ for $n=0,1,\cdots,N-1$. We define the $k$-step backward differentiation operator as $D_{k\tau}v^{n+1}:=\frac{\alpha_kv^{n+1}-A_k(v^n)}{\tau}$ for any sequence $\{v^n\}_{n=k-1}^{N-1}$, where $\alpha_k$, $A_k(v^n)$ and the extrapolation operator $B_k(v^n)$ are defined as follows:
	
	First-order $(k=1)$:
	\begin{align}\label{k=1}
		\alpha_1=1,{}A_1(v^n)=v^n,{}B_1(v^n)=v^n;
	\end{align}
	
	Second-order $(k=2)$:
	\begin{align}\label{k=2}
		\alpha_2=\frac32,{}A_2(v^n)=2v^n-\frac12v^{n-1},{}B_2(v^n)=2v^n-v^{n-1};
	\end{align}
	
	Third-order $(k=3)$:
	\begin{align}\label{k=3}
		\begin{aligned}
			\alpha_3=\frac{11}{6},{}A_3(v^n)={}&3v^n-\frac32v^{n-1}+\frac13v^{n-2},\\ B_3(v^n)={}&3v^n-3v^{n-1}+v^{n-2};
		\end{aligned}
	\end{align}
	
	Fourth-order $(k=4)$:
	\begin{align}\label{k=4}
		\begin{aligned}
			\alpha_4=\frac{25}{12},A_4(v^n)={}&4v^n-3v^{n-1}+\frac43v^{n-2}-\frac14v^{n-3},\\ B_4(v^n)={}&4v^n-6v^{n-1}+4v^{n-2}-v^{n-3};
		\end{aligned}
	\end{align}
	
	Fifth-order $(k=5)$:
	\begin{align}\label{k=5}
		\begin{aligned}
			\alpha_5=\frac{137}{60},A_5(v^n)={}&5v^n-5v^{n-1}+\frac{10}3v^{n-2}-\frac54v^{n-3}+\frac15v^{n-4},\\
			B_5(v^n)={}&5v^n-10v^{n-1}+10v^{n-2}-5v^{n-3}+v^{n-4}.
		\end{aligned}
	\end{align}
	
	Next, we construct $k$th-order schemes to approximate the equivalent system \eqref{eq.2.3} in a unified form. Given the initial values $(\bar{c}^i, u^i, \rho^i, c^i)$ that satisfy  $\int_{\Omega}\rho^i\diff \mathbf{x}=\int_{\Omega}\rho_0\diff \mathbf{x}$ for $i=0,\cdots,k-1$, we determine ($\bar{c}^{n+1}$, $u^{n+1}$, $\bar{\rho}^{n+1}$, $\lambda^{n+1}$, $\rho^{n+1}$, $\mu^{n+1}$, $c^{n+1}$) for $n=k-1,\cdots,N-1$, by following these steps.
	
	$\textbf{Step 1}$. Find $\bar{c}^{n+1}$ by solving
	\begin{equation}
		D_{k\tau}\bar{c}^{n+1}=\varDelta\bar{c}^{n+1}- \bar{c}^{n+1}+B_k(\rho^n)\label{eq.3.1},
	\end{equation}
	with the boundary condition being $\partial\bar{c}^{n+1}/\partial\nu=0$;
	
	$\textbf{Step 2}$. Find $u^{n+1}$ by solving
	\begin{equation}
		D_{k\tau}u^{n+1}=\varDelta u^{n+1}+\nabla u^{n+1} \cdot \nabla B_k(u^n)-\nabla u^{n+1} \cdot \nabla B_k(c^n)-\varDelta \bar{c}^{n+1},\label{eq.3.2}
	\end{equation}
	with the boundary condition being $\partial u^{n+1}/\partial \nu=0$;
	
	$\textbf{Step 3}$. Find $\rho^{n+1}$ by
	\begin{equation}
		\rho^{n+1}=\lambda^{n+1} \bar{\rho}^{n+1};\label{eq.3.3}
	\end{equation}
	with
	\begin{equation}\label{eq.3.4}
		\bar{\rho}^{n+1}=\exp(u^{n+1}),\quad\lambda^{n+1}=\frac{\int_\varOmega \rho^n \diff\mathbf{x}}{\int_\varOmega \bar{\rho}^{n+1} \diff\mathbf{x}};
	\end{equation}
	
	$\textbf{Step 4}$. Find $\mu^{n+1}$ by solving
	\begin{equation}\label{eq.3.5}
		D_{k\tau}E^{n+1}=-\int_{\Omega}\Big[\rho^{n+1}|\nabla(\log \rho^{n+1} - \bar{c}^{n+1})|^2+|D_{k\tau}\bar{c}^{n+1}|^2\Big] \diff\mathbf{x},
	\end{equation}
	where	
	\begin{equation*}
		E^{n+1}=\int_{\Omega}\Big[\rho^{n+1}\log(\rho^{n+1})-\rho^{n+1} -\rho^{n+1}\bar{c}^{n+1}+\frac12\mu^{n+1} (\bar{c}^{n+1})^2+\frac12\mu^{n+1}|\nabla \bar{c}^{n+1}|^2\Big]\diff\mathbf{x}.
	\end{equation*}
	
	$\textbf{Step 5}$. Find $c^{n+1}$ by
	\begin{equation}
		c^{n+1}=\sqrt{\mu^{n+1}} \bar{c}^{n+1}.\label{eq.3.6}
	\end{equation}
	In these numerical schemes, $\bar{c}^0=c^0=c_0,~u^0=u_0,~\rho^0=\rho_0$, $(\bar{c}^{n+1}, c^{n+1})$, $u^{n+1}$ and $(\bar{\rho}^{n+1}, \rho^{n+1})$ are the temporal semi-discrete $k$th-order approximations of $c(t_{n+1})$, $u(t_{n+1})$ and $\rho(t_{n+1})$ respectively, and ($\lambda^{n+1}, \mu^{n+1}$) is the $k$th-order approximation of the constant $1$. These approximations will be proved in the next section and it will be shown that $\mu^{n+1}\in\mathbb{R}^+$ is solvable if $\tau$ is sufficiently small. In Step 3, we incorporate a recovery technique to ensure mass conservation within the system. In Steps 4-5, we introduce an EPC to guarantee that these schemes preserve the energy-law \eqref{eq.1.3}.
	\begin{remark}
		The numerical solutions for the first step of the second-order scheme can be obtained by using the first-order scheme. Similarly, when dealing with the $k$th-order scheme (where $k=3, 4, 5$), the initial $k-1$ steps can be initialized using a $k-1$th-order Runge-Kutta method.
	\end{remark}
	\begin{remark}
		From the above schemes, it is evident that a decoupled and linear system is solved exclusively at Steps 1 and 2. Steps 3-5, which consist solely of assignment operations, can be implemented efficiently.
	\end{remark}
	
	To facilitate subsequent error analysis, we present the following assumptions about the initial values for the first $k-1$ steps.
	\begin{assumption}\label{as.3.1}
		Assume that for $i=1,\cdots,k-1$, the values $(\bar{c}^i,u^i, \bar{\rho}^i, \lambda^i, \rho^i, \mu^i, c^i)$ have been computed using an appropriate initialization procedure, ensuring that
		\begin{align*}
			&\|c(t_i)-c^i\|_0^2+\|c(t_i)-\bar{c}^i\|_0^2+\|\rho(t_i)-\rho^i\|_0^2+\|\rho(t_i)-\bar{\rho}^i\|_0^2\\
			&+\|u(t_i)-u^i\|_0^2+|1-\lambda^i|^2+|1-\mu^i|^2\le C\tau^{2k},\\
			&\|\nabla \big(c(t_i)-c^i\big)\|_0^2+\|\nabla\big(c(t_i)-\bar{c}^i\big)\|_0^2+\|\nabla\big(\rho(t_i)-\rho^i\big)\|_0^2\\
			&+\|\nabla\big(\rho(t_i)-\bar{\rho}^i\big)\|_0^2+\|\nabla\big(u(t_i)-u^i\big)\|_0^2\le C\tau^{2k}.
		\end{align*}
	\end{assumption}
	
	The theorem below presents the main results related to the convergence order, with its proof detailed in the subsequent section.
	\begin{theorem}\label{th.3.1}
		Under the Assumptions \ref{as.2.1} and \ref{as.3.1}, there exists a sufficiently small positive constant $\tau^*$ such that, for any $\tau$ satisfying $\tau\le\tau^*$ and for $n=k-1,\cdots,N-1$, the solutions obtained by the schemes \eqref{eq.3.1}-\eqref{eq.3.6} fulfill
		\begin{align}
			\|\bar{e}_c^{n+1}\|_0^2+\tau\sum_{q=k-1}^n\|\nabla\bar{e}_c^{k+1}\|_0^2\le{}& C\tau^{2k},\label{eq.3.8}\\
			\|\nabla\bar{e}_c^{n+1}\|_0^2+ \tau\sum_{q=k-1}^n\|\varDelta\bar{e}_c^{k+1}\|_0^2\le{}& C\tau^{2k},\label{eq.3.9}\\
			\|e_u^{n+1}\|_0^2+\tau\sum_{q=k-1}^n\|\nabla e_u^{n+1}\|_0^2\le{}& C\tau^{2k},\label{eq.3.10}\\
			\|\nabla e_u^{n+1}\|_0^2+\tau\sum_{q=k-1}^n\|\varDelta e_u^{n+1}\|_0^2\le{}& C\tau^{2k},\label{eq.3.11}\\
			\|\bar{e}_\rho^{n+1}\|_0^2+\|\nabla\bar{e}_\rho^{n+1}\|_0^2+\|e_\rho^{n+1}\|_0^2+\|\nabla e_\rho^{n+1}\|_0^2+|1-\lambda^{n+1}|^2\le{}& C\tau^{2k},\label{eq.3.12}\\			
			\|e_c^{n+1}\|_0^2+\|\nabla e_c^{n+1}\|_0^2+|1-\mu^{n+1}|^2\le{}& C\tau^{2k},\label{eq.3.13}\\
			\|\bar{c}^{n+1}\|_2^2+\|c^{n+1}\|_2^2+\|u^{n+1}\|_2^2+\|\rho^{n+1}\|_2^2\le{}& C.\label{eq.3.14}
		\end{align}
	\end{theorem}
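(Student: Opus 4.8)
The plan is to prove all of the estimates \eqref{eq.3.8}--\eqref{eq.3.14} simultaneously by induction on the time level $n$, since the seven groups of bounds are tightly coupled through the five steps of the scheme and cannot be decoupled. The base cases $n=0,\dots,k-2$ are furnished directly by Assumption~\ref{as.3.1}, so it suffices to assume that \eqref{eq.3.8}--\eqref{eq.3.14} hold at all indices $\le n$ and to establish them at level $n+1$. Throughout I write $\bar e_c^{m}=c(t_m)-\bar c^m$, $e_c^m=c(t_m)-c^m$, $e_u^m=u(t_m)-u^m$, $\bar e_\rho^m=\rho(t_m)-\bar\rho^m$ and $e_\rho^m=\rho(t_m)-\rho^m$. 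The engine driving the multistep energy estimates is the $G$-stability of the $k$-step BDF operator $D_{k\tau}$: for $k\le 5$ there exist the Nevanlinna--Odeh multipliers that render the method $G$-stable, which is exactly what converts the multistep structure into a telescoping-in-time quadratic inequality and produces the nonnegative dissipation sums $\tau\sum_q$ on the left-hand sides of \eqref{eq.3.8}--\eqref{eq.3.11}. The order of operations follows the five steps: first $\bar c^{n+1}$, then $u^{n+1}$, then $(\bar\rho^{n+1},\lambda^{n+1})$, then $\mu^{n+1}$, and finally $c^{n+1}$.

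For \eqref{eq.3.8}--\eqref{eq.3.9} I would subtract \eqref{eq.3.1} from \eqref{eq.2.3a} at $t_{n+1}$, obtaining the error equation
\begin{equation*}
D_{k\tau}\bar e_c^{n+1}=\varDelta\bar e_c^{n+1}-\bar e_c^{n+1}+\big(\rho(t_{n+1})-B_k(\rho^n)\big)+R_c^{n+1},
\end{equation*}
where $R_c^{n+1}$ is the BDF consistency error, controlled in the integrated $L^2$ sense by $C\tau^k$ via Assumption~\ref{as.2.1}. The forcing splits into the extrapolation truncation $\rho(t_{n+1})-B_k(\rho(t_n))=O(\tau^k)$ and the controllable part $B_k(e_\rho^n)$, the latter bounded by the induction hypothesis \eqref{eq.3.12} at the previous $k$ levels. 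Testing against $\bar e_c^{n+1}$ yields \eqref{eq.3.8} and against $-\varDelta\bar e_c^{n+1}$ yields \eqref{eq.3.9}, the $G$-stability supplying the telescoping form and discrete Gronwall closing each bound. The error equation for $u$ follows identically from \eqref{eq.3.2} and \eqref{eq.2.3b}; the difficulty is the three quadratic terms, whose linearized errors are of the generic form $\nabla e_u^{n+1}\cdot\nabla B_k(u^n)$, $\nabla u(t_{n+1})\cdot\nabla B_k(e_u^n)$ and the mixed $u$--$c$ analogues. These are handled with the trilinear bounds \eqref{eq.2.1}, the Gagliardo--Nirenberg and Agmon inequalities \eqref{eq.2.2}, and the $H^2$ bounds \eqref{eq.3.14} at earlier levels, after which testing against $e_u^{n+1}$ and $-\varDelta e_u^{n+1}$ gives \eqref{eq.3.10}--\eqref{eq.3.11}.

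To obtain \eqref{eq.3.12} I would pass from $e_u$ to $\bar e_\rho$ through $\bar\rho^{n+1}=\exp(u^{n+1})$ and $\rho(t_{n+1})=\exp(u(t_{n+1}))$: a mean-value estimate together with $L^\infty$ control of $u$ and $u^{n+1}$ (via Agmon \eqref{eq.2.2b} and \eqref{eq.3.14}) gives $\|\bar e_\rho^{n+1}\|_0+\|\nabla\bar e_\rho^{n+1}\|_0\le C\big(\|e_u^{n+1}\|_0+\|\nabla e_u^{n+1}\|_0\big)$. Writing $1-\lambda^{n+1}=\big(\int_\Omega\bar\rho^{n+1}-\rho^n\,\diff\mathbf{x}\big)/\int_\Omega\bar\rho^{n+1}\diff\mathbf{x}$ and using that both the scheme (by Step~3) and the exact solution (by \eqref{eq.1.2}) conserve mass, the numerator reduces to $-\int_\Omega\bar e_\rho^{n+1}\diff\mathbf{x}$, whence $|1-\lambda^{n+1}|\le C\|\bar e_\rho^{n+1}\|_0\le C\tau^k$; since $e_\rho^{n+1}=\bar e_\rho^{n+1}+(1-\lambda^{n+1})\bar\rho^{n+1}$, the remaining parts of \eqref{eq.3.12} follow. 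For the EPC I would first observe that \eqref{eq.3.5} is \emph{affine} in $\mu^{n+1}$ with leading coefficient $\tfrac12\alpha_k\|\bar c^{n+1}\|_1^2$, which is bounded below because $\bar c^{n+1}$ is $O(\tau^k)$-close to $c(t_{n+1})$, whose mean is positive; hence $\mu^{n+1}\in\mathbb{R}^+$ exists and is unique for small $\tau$. To bound $|1-\mu^{n+1}|$ I would compare \eqref{eq.3.5} evaluated at $\mu^{n+1}=1$ against the exact dissipation law \eqref{eq.1.3} discretized by $D_{k\tau}$: the residual is a consistency error of size $\tau^k$ (using the error bounds already proven to replace discrete by exact quantities), so dividing by the bounded-below coefficient gives $|1-\mu^{n+1}|\le C\tau^k$, hence \eqref{eq.3.13}, with $e_c^{n+1}=c(t_{n+1})-\sqrt{\mu^{n+1}}\,\bar c^{n+1}$ following from $|1-\sqrt{\mu^{n+1}}|\le C|1-\mu^{n+1}|$ together with \eqref{eq.3.8}--\eqref{eq.3.9}.

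Finally \eqref{eq.3.14} closes the induction: the $H^2$ bounds on $\bar c^{n+1}$, $c^{n+1}$, $u^{n+1}$ and $\rho^{n+1}$ come from elliptic regularity applied to \eqref{eq.3.1}--\eqref{eq.3.2}, the error bounds just established, and the exact regularity of Assumption~\ref{as.2.1} and \eqref{eq.2.4}. The main obstacle, in my view, is twofold and concentrated in Step~2. First, the $H^2$ a priori bound \eqref{eq.3.14} is simultaneously \emph{used} (to estimate the trilinear terms in the $u$-equation) and \emph{proved} within the same induction, so the argument must be sequenced so that at level $n+1$ the trilinear estimates invoke \eqref{eq.3.14} only at levels $\le n$, and \eqref{eq.3.14} at level $n+1$ is then recovered a posteriori; keeping this free of circularity, and absorbing the resulting gradient terms into the $G$-stable dissipation before invoking discrete Gronwall, is the delicate point. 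Second, verifying $G$-stability uniformly for $k=4,5$ requires the explicit Nevanlinna--Odeh multipliers, and the cross terms they generate--coupling $\bar e_c$, $e_u$ and the extrapolation errors--must be shown dominated by the available dissipation; this is where the high-order case is genuinely harder than the first- and second-order ones.
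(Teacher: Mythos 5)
Your proposal follows essentially the same route as the paper's proof: induction on the time level, error equations tested against the Nevanlinna--Odeh multiplied increments $\tau(v^{n+1}-\sigma_k v^n)$ (the paper's Lemma \ref{le.4.1}), sequential estimates through Steps 1--5 culminating in the lower bound on the EPC coefficient via the positivity of $\int_\Omega c\diff\mathbf{x}$, a discrete Gronwall argument, and the $H^2$ bounds recovered a posteriori by triangle inequality exactly as you describe. The minor deviations --- your more direct identity $1-\lambda^{n+1}=-\big(\int_\Omega\bar{e}_\rho^{n+1}\diff\mathbf{x}\big)\big/\int_\Omega\bar{\rho}^{n+1}\diff\mathbf{x}$ versus the paper's Lemma \ref{le.4.4}, and phrasing the $\mu$-estimate through the affine structure of \eqref{eq.3.5} rather than the paper's explicit $I_1,\dots,I_7$ decomposition in Lemma \ref{le.4.5} --- are cosmetic rather than structural.
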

	With the above theorem, we can confirm that these schemes satisfy the following three properties.
	\begin{theorem}\label{th.3.2}
		Assuming Theorem \ref{th.3.1} holds, given $\rho^i>0$ such that $\int_{\Omega}\rho^i\diff \mathbf{x}=\int_{\Omega}\rho_0\diff \mathbf{x}$ for $i=0,1,\cdots,k-1$, then $\mu^{n+1}$ is solvable in $\mathbb{R}^+$ and the numerical schemes \eqref{eq.3.1}-\eqref{eq.3.6} possess the following properties for $n=k-1,\cdots,N-1$:\\
		1. Mass conservation: $\int_{\Omega}\rho^{n+1}\diff \mathbf{x}=\int_{\Omega}\rho_0\diff \mathbf{x}$;\\
		2. Positivity preservation: $\rho^{n+1}>0$;\\
		3. Energy-law preservation:
		\begin{equation*}
			D_{k\tau}E^{n+1}=-\int_{\Omega}\rho^{n+1}|\nabla(\log \rho^{n+1} - \bar{c}^{n+1})|^2+|D_{k\tau}\bar{c}^{n+1}|^2\diff \mathbf{x}\le0,
		\end{equation*}
		where $E^{n+1}$ is defined in \eqref{eq.3.5}.
	\end{theorem}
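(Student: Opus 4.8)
The plan is to verify the three properties in turn, handling positivity and mass conservation together by a single induction on $n$, and to reduce the energy-law statement essentially to the definition of $\mu^{n+1}$ in Step 4. The only genuinely nontrivial point will be the solvability of $\mu^{n+1}$ in $\mathbb{R}^+$, which I expect to be the main obstacle.

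First I would establish Properties 1 and 2 simultaneously. As the induction hypothesis, assume $\rho^j>0$ and $\int_\Omega\rho^j\diff\mathbf{x}=\int_\Omega\rho_0\diff\mathbf{x}$ for all $j\le n$; the base cases $j=0,\dots,k-1$ are precisely the hypotheses of the theorem. From \eqref{eq.3.4}, $\bar\rho^{n+1}=\exp(u^{n+1})>0$ pointwise, so $\int_\Omega\bar\rho^{n+1}\diff\mathbf{x}>0$; combined with $\int_\Omega\rho^n\diff\mathbf{x}=\int_\Omega\rho_0\diff\mathbf{x}>0$, the definition of $\lambda^{n+1}$ gives $\lambda^{n+1}>0$, whence $\rho^{n+1}=\lambda^{n+1}\bar\rho^{n+1}>0$ by \eqref{eq.3.3}. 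Integrating \eqref{eq.3.3} and inserting \eqref{eq.3.4} yields
\[
\int_\Omega\rho^{n+1}\diff\mathbf{x}=\lambda^{n+1}\int_\Omega\bar\rho^{n+1}\diff\mathbf{x}=\int_\Omega\rho^n\diff\mathbf{x}=\int_\Omega\rho_0\diff\mathbf{x},
\]
which closes the induction and proves both positivity and mass conservation.

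For the solvability of $\mu^{n+1}$, I would exploit that $E^{n+1}$ is affine in $\mu^{n+1}$. Writing $E^{n+1}=F^{n+1}+\mu^{n+1}G^{n+1}$, where $G^{n+1}=\frac12\big(\|\bar c^{n+1}\|_0^2+\|\nabla\bar c^{n+1}\|_0^2\big)$ and $F^{n+1}$ collects the $\mu^{n+1}$-independent terms, equation \eqref{eq.3.5} becomes a single scalar linear equation for $\mu^{n+1}$ with leading coefficient $\alpha_kG^{n+1}/\tau$ (note that $A_k(E^n)$ and the entire right-hand side of \eqref{eq.3.5} are already determined by quantities computed in Steps 1--3 and at previous time levels). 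The key point is that $G^{n+1}$ is bounded away from zero: integrating \eqref{eq.2.3a} over $\Omega$ with the Neumann condition shows that $m(t):=\int_\Omega c(t)\diff\mathbf{x}$ solves $m'(t)=-m(t)+\int_\Omega\rho_0\diff\mathbf{x}$ with $m(0)=\int_\Omega c_0\diff\mathbf{x}>0$, hence $m(t)\ge\min\{m(0),\int_\Omega\rho_0\diff\mathbf{x}\}>0$ uniformly in $t$. By Cauchy--Schwarz this gives $\|c(t_{n+1})\|_0\ge c_*>0$, and together with the bound $\|\bar c^{n+1}-c(t_{n+1})\|_0\le C\tau^k$ from \eqref{eq.3.8} one obtains $\|\bar c^{n+1}\|_0\ge c_*-C\tau^k>0$ for $\tau\le\tau^*$, so $G^{n+1}\ge\frac12 c_*^2>0$. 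Thus the linear equation admits a unique real root $\mu^{n+1}$, and the a priori estimate $|1-\mu^{n+1}|^2\le C\tau^{2k}$ in \eqref{eq.3.13} forces $\mu^{n+1}>0$ once $\tau$ is small enough, establishing solvability in $\mathbb{R}^+$.

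Finally, the energy-law preservation holds by construction: Step 4 defines $\mu^{n+1}$ so that \eqref{eq.3.5} is satisfied, which is exactly the claimed identity for $D_{k\tau}E^{n+1}$; it then remains only to check the sign. Both integrand terms are nonnegative — $|D_{k\tau}\bar c^{n+1}|^2\ge0$ trivially, and $\rho^{n+1}|\nabla(\log\rho^{n+1}-\bar c^{n+1})|^2\ge0$ because $\rho^{n+1}>0$ by the positivity established above — so $D_{k\tau}E^{n+1}\le0$. The hard part is precisely the solvability claim: linearity renders existence and uniqueness of a real root routine, but excluding $G^{n+1}=0$ and, above all, guaranteeing positivity of the root genuinely rely on the error bounds of Theorem \ref{th.3.1} and on the lower bound for $\|\bar c^{n+1}\|_0$, so the conclusion is necessarily conditional on $\tau\le\tau^*$.
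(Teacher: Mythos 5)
Your proposal is correct, and for the three listed properties it follows essentially the paper's route: the paper likewise gets positivity from $\bar\rho^{n+1}=\exp(u^{n+1})>0$ and $\lambda^{n+1}>0$ via \eqref{eq.3.3}--\eqref{eq.3.4}, gets mass conservation by integrating \eqref{eq.3.3} and telescoping (this is exactly \eqref{mass conservation}, established inside the proof of Lemma \ref{le.4.5}), and reads the energy identity and its sign directly off \eqref{eq.3.5}. The one place you genuinely go beyond the paper's own (very terse) proof of Theorem \ref{th.3.2} is the solvability of $\mu^{n+1}$: the paper's proof is silent on it, leaving it implicit in Theorem \ref{th.3.1}, whose proof (via Lemma \ref{le.4.5}) isolates $(\mu^{n+1}-1)$ against the coefficient $\int_\Omega (c(t_{n+1}))^2+|\nabla c(t_{n+1})|^2\diff\mathbf{x}$, bounds that coefficient below by $\frac1{|\Omega|}\big(\int_\Omega c(t_{n+1})\diff\mathbf{x}\big)^2>0$ using the asymptotic property of $\int_\Omega c$ (cited from the literature), and then concludes $\mu^{n+1}\ge\frac12$ for small $\tau$. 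Your argument is the same idea made self-contained and, as a proof of \emph{solvability} per se, cleaner: you derive the ODE $m'(t)=-m(t)+\int_\Omega\rho_0\diff\mathbf{x}$ for $m(t)=\int_\Omega c\diff\mathbf{x}$ directly rather than citing it, and you place the lower bound on $\|\bar c^{n+1}\|_0$ itself (triangle inequality plus \eqref{eq.3.8}) so that the affine scalar equation for $\mu^{n+1}$ visibly has a nonzero leading coefficient $\alpha_k G^{n+1}/\tau$; invoking \eqref{eq.3.13} afterwards to force the root into $\mathbb{R}^+$ is legitimate here since the theorem explicitly assumes Theorem \ref{th.3.1}. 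Two cosmetic remarks: your bound should read $G^{n+1}\ge\frac12(c_*-C\tau^k)^2$ rather than $\frac12 c_*^2$ (harmless for $\tau\le\tau^*$), and positivity of $\lambda^{n+1}$ needs only $\int_\Omega\rho^n\diff\mathbf{x}>0$, which already follows from $\rho^n>0$ without appealing to the conserved value.
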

	\begin{proof}
		For the proof of mass conservation, please refer to \eqref{mass conservation} as presented in the proof process of Theorem \ref{th.3.1}. From \eqref{eq.3.4}, we can infer $\bar{\rho}^{n+1}>0$ and $\lambda^{n+1}>0$. Therefore, using \eqref{eq.3.3}, we can deduce $\rho^{n+1}>0$. Finally, the energy-law preservation can be easily derived from \eqref{eq.3.5}.
	\end{proof}
	
	\section{Error Analysis}\label{sec.4}
	This section is dedicated to proving Theorem \ref{th.3.1}, based on Assumptions~\ref{as.2.1} and \ref{as.3.1}.
	
	For notational simplicity, we denote
	\begin{align}
		\bar{e}_c^n:={}&c(t_n)-\bar{c}^n,\quad e_c^n:=c(t_n)-c^n,\quad e_u^n:=u(t_n)-u^n,\notag\\
		\bar{e}_\rho^n:={}&\rho(t_n)-\bar{\rho}^n,\quad e_\rho^n:=\rho(t_n)-\rho^n.\notag
	\end{align}
	
	Next, we derive the error equations, which are essential for analyzing the error between the exact and numerical solutions. For $n=k-1,\cdots,N-1$, based on \eqref{eq.2.3a} and \eqref{eq.2.3b}, the exact solutions $c(t_{n+1})$, $\rho(t_{n+1})$ and $u(t_{n+1})$ satisfy the following variational formulations:
	\begin{align}
		D_{k\tau}c(t_{n+1})={}&\varDelta c(t_{n+1})-c(t_{n+1})+\rho(t_{n+1})-\big(c_t(t_{n+1})-D_{k\tau}c(t_{n+1})\big),\label{eq.4.1}\\
		D_{k\tau}u(t_{n+1})={}&\varDelta u(t_{n+1})+\nabla u(t_{n+1})\cdot\nabla u(t_{n+1})-\nabla u(t_{n+1})\cdot\nabla c(t_{n+1})\notag\\
		&-\varDelta c(t_{n+1})-\big(u_t(t_{n+1})-D_{k\tau}u(t_{n+1})\big).\label{eq.4.2}
	\end{align}
	By subtracting \eqref{eq.3.1} from \eqref{eq.4.1} and \eqref{eq.3.2} from \eqref{eq.4.2} yields the subsequent error equations:
	\begin{align}
		D_{k\tau}\bar{e}_c^{n+1}-\varDelta\bar{e}_c^{n+1}+\bar{e}_c^{n+1} =B_k(e_\rho^n)+Q_k^{n+1}(\rho)-R_k^{n+1}(c),\label{eq.4.3}
	\end{align}
	and
	\begin{align}\label{eq.4.4}
		D_{k\tau}e_u^{n+1}-\varDelta e_u^{n+1}={}&\nabla u(t^{n+1})\cdot\nabla\big(Q_k^{n+1}(u)+B_k(e_u^n)\big)+\nabla e_u^{n+1}\cdot\nabla B_k(u^n)\notag\\
		&-\nabla u(t^{n+1})\cdot\nabla\big(Q_k^{n+1}(c)+B_k(e_c^n)\big)
		-\nabla e_u^{n+1}\cdot\nabla B_k(c^n)\notag\\
		&-\varDelta\bar{e}_c^{n+1}-R_k^{n+1}(u),
	\end{align}
	where $R_k^{n+1}(v)$ and $Q_k^{n+1}(v)$ represent the truncation error functions defined as
	\begin{align}
		R_k^{n+1}(v):={}&v_t(t_{n+1})-D_{k\tau}v(t_{n+1})\notag\\
		={}&\frac1\tau\sum_{i=1}^k\bigg(r_i\int_{t^{n+1-i}}^{t^{n+1}}(t-t^{n+1-i})^k\partial_t^{k+1}v(t)\diff t\bigg),\label{eq.4.5}\\
		Q_k^{n+1}(v):={}&v(t_{n+1})-B_k(v(t_n))\notag\\
		={}&\sum_{i=1}^k\bigg(s_i\int_{t^{n+1-i}}^{t^{n+1}}(t-t^{n+1-i})^{k-1}\partial_t^kv(t)\diff t\bigg),\notag
	\end{align}
	with $r_i$ and $s_i$ being fixed constants that are determined by the backward differentiation operator $D_{k\tau}$ and the extrapolation operator $B_k$. Using Hölder's inequality, we can easily obtain
	\begin{subequations}\label{eq.4.6}
		\begin{align}
			\|R_k^{n+1}(v)\|_0^2\le& C\tau^{2k-1}\int_{t_{n+1-k}}^{t_{n+1}} \|\partial_t^{k+1}v\|_0^2 \diff t,\label{eq.4.6a}\\
			\|Q_k^{n+1}(v)\|_0^2\le& C\tau^{2k-1}\int_{t_{n+1-k}}^{t_{n+1}} \|\partial_t^kv\|_0^2\diff t.\label{eq.4.6b}
		\end{align}
	\end{subequations}
	Concerning the backward difference operator $D_{k\tau}$, there exists the following lemma, which plays a significant role in error analysis.
	\begin{lemma}[{\cite{bib36}, Lemma 1}]\label{le.4.1}
		For $k=1, \cdots, 5$, there exist constants $\sigma_k$ satisfying $0\le \sigma_k<1$, a symmetric and positive definite matrix $G=(g_{ij})\in \mathbb{R}^{k\times k}$, and real numbers $\delta_0,\dots,\delta_k$ such that the following equality holds:
		\begin{align*}
			\big(D_{k\tau}v^{n+1},\tau(v^{n+1}-\sigma_kv^n)\big)={}&\sum_{i,j=1}^kg_{ij}(v^{n+1+i-k},v^{n+1+j-k})\\\notag
			-{}&\sum_{i,j=1}^kg_{ij}(v^{n+i-k},v^{n+j-k})+\|\sum_{i=0}^k\delta_iv^{n+1+i-k}\|_0^2,
		\end{align*}
		where the smallest feasible values of $\sigma_k$ are given by
		\begin{align*}
			\sigma_1=\sigma_2=0,\quad\sigma_3=0.0836,\quad\sigma_4=0.2878,\quad\sigma_5=0.8160,
		\end{align*}
		and $\alpha_k$, $A_k$ are defined the same as in \eqref{k=1}-\eqref{k=5}.
	\end{lemma}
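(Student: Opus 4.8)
The plan is to recognize this as the discrete energy (G-stability) identity for the $k$-step backward differentiation formula, and to establish it by combining the Nevanlinna--Odeh multiplier technique with Dahlquist's G-stability representation. First I would pass from the difference operator to generating polynomials: writing $\tau D_{k\tau}v^{n+1}=\sum_{j=0}^{k}\gamma_j\,v^{n+1-j}$ with the standard BDF-$k$ coefficients $\gamma_j$ (so that $\gamma_0=\alpha_k$), I would note that these coefficients are generated by $\delta(\zeta)=\sum_{\ell=1}^{k}\frac1\ell(1-\zeta)^\ell$, while the test expression $v^{n+1}-\sigma_k v^n$ corresponds to the degree-one multiplier $\mu(\zeta)=1-\sigma_k\zeta$. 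In this language the claimed pointwise identity is exactly the statement that the one-leg pairing of $\delta$ against $\mu$ admits a telescoping quadratic-form decomposition with a positive definite Gram matrix $G$ and a nonnegative square remainder.

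The first ingredient is a reduction to a scalar condition. I would invoke Dahlquist's equivalence theorem in the form used by Nevanlinna and Odeh: the existence of a symmetric positive definite $G=(g_{ij})$ together with real numbers $\delta_0,\dots,\delta_k$ making the identity hold for every sequence in an inner-product space is equivalent to
\[
\mathrm{Re}\,\frac{\delta(\zeta)}{\mu(\zeta)}\ge 0\qquad\text{for all }|\zeta|\le 1 .
\]
Concretely, on $|\zeta|=1$ this is the nonnegativity of the trigonometric polynomial $\mathrm{Re}\,[\delta(\zeta)\overline{\mu(\zeta)}]$; its Fej\'er--Riesz factorization supplies the coefficients $\delta_i$ appearing in the square term $\|\sum_{i=0}^{k}\delta_i v^{n+1+i-k}\|_0^2$, and the residual part determines $G$ through a finite linear-algebra computation carried out once for each fixed $k$.

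The second ingredient is the verification of the multiplier condition for $k=1,\dots,5$. For $k=1,2$ the method is A-stable, so $\sigma_k=0$ is admissible and the identity follows by completing the square; indeed $\mathrm{Re}\,\delta(e^{i\theta})=(1-\cos\theta)^k$ in these two cases. For $k=3,4,5$, A-stability fails and a strictly positive multiplier becomes indispensable, so I would restrict to the boundary $\zeta=e^{i\theta}$ (the interior case following from the maximum principle for the harmonic function $\mathrm{Re}\,\delta/\mu$) and check nonnegativity of the resulting real trigonometric expression for the stated $\sigma_k$.

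The hard part will be this boundary verification in the high-order cases, and most acutely $k=5$: since $\sigma_5=0.8160$ sits close to the barrier $\sigma_k<1$, the real part $\mathrm{Re}\,\delta(e^{i\theta})/(1-\sigma_5 e^{i\theta})$ grazes zero with almost no margin, which is precisely what forces the restriction $k\le 5$ (no admissible degree-one multiplier of this form exists beyond $k=5$). Because this delicate positivity check is the content of Lemma~1 of \cite{bib36}, in practice I would cite that verification for the numerical margins while carrying out the reduction explicitly so as to exhibit $G$ and the $\delta_i$.
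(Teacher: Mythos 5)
The paper never proves this lemma itself---it is imported verbatim as Lemma~1 of \cite{bib36}---so the only ``proof'' in the paper is that citation. Your reconstruction via Dahlquist's G-stability equivalence combined with the Nevanlinna--Odeh multiplier verification (including the correct observation that $\mathrm{Re}\,\delta(e^{i\theta})=(1-\cos\theta)^k$ for $k=1,2$, the minimum-principle reduction to the boundary, and the Fej\'er--Riesz factorization supplying the $\delta_i$) is exactly the argument underlying the cited result, so your proposal is correct and takes essentially the same route as the source the paper relies on.
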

	For the matrix $G=(g_{ij})$ mentioned above, since it is symmetric and positive definite, the following conclusions can be deduced:
	\begin{align*}
		\lambda_{\min}\sum_{i=1}^k\|v^{n+1+i-k}\|_0^2\le\sum_{i,j=1}^kg_{ij}(v^{n+1+i-k},v^{n+1+j-k})\le\lambda_{\max}\sum_{i=1}^k\|v^{n+1+i-k}\|_0^2,
	\end{align*}
	where $\lambda_{\min},\lambda_{\max}>0$ denote the smallest and largest eigenvalues of $G=(g_{ij})$, respectively. This can be further inferred
	\begin{align}
		\lambda_{\min}\|v^{n+1}\|_0^2\le\sum_{i,j=1}^kg_{ij}(v^{n+1+i-k},v^{n+1+j-k}).\label{eq.4.7}
	\end{align}
	
	By analyzing Steps 1-5 within the discrete schemes \eqref{eq.3.1}-\eqref{eq.3.6}, we can sequentially obtain error estimates for $\bar{c}^{n+1}$, $u^{n+1}$, $\bar{\rho}^{n+1}$, $\lambda^{n+1}$, $\rho^{n+1}$, $\mu^{n+1}$, and $c^{n+1}$. These error estimates are summarized in Lemmas \ref{le.4.2}-\ref{le.4.6}. Subsequently, by employing these lemmas in mathematical induction, we deduce Theorem \ref{th.3.1}.
	\begin{lemma}\label{le.4.2}
		Under the Assumption \ref{as.2.1}, for all $n=k-1,\cdots,N-1$, we have
		\begin{align}
			\|\bar{e}_c^{n+1}\|_0^2+\tau\sum_{q=k-1}^n\|\nabla\bar{e}_c^{q+1}\|_0^2\le {}&C\tau^{2k}+C\tau\sum_{q=k-1}^n\|\bar{e}_c^{q}\|_1^2+C\tau\sum_{q=0}^n\|e_\rho^q\|_0^2,\label{eq.4.8}\\
			\|\nabla \bar{e}_c^{n+1}\|_0^2+ \tau\sum_{q=k-1}^n\|\varDelta\bar{e}_c^{q+1}\|_0^2 \le{}&C\tau^{2k}+C\tau\sum_{q=k-1}^n\|\nabla\bar{e}_c^{q}\|_1^2+C\tau\sum_{q=0}^n\|\nabla e_\rho^q\|_0^2.\label{eq.4.9}
		\end{align}
	\end{lemma}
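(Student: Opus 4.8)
The plan is to derive both estimates from the error equation \eqref{eq.4.3} by testing against a $\sigma_k$-weighted increment of $\bar{e}_c$, so that Lemma~\ref{le.4.1} can be invoked to convert the backward-difference term into a telescoping $G$-quadratic form plus a nonnegative square.

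For the first estimate \eqref{eq.4.8}, I would test \eqref{eq.4.3} with $\tau(\bar{e}_c^{n+1}-\sigma_k\bar{e}_c^n)$. By Lemma~\ref{le.4.1} the term $(D_{k\tau}\bar{e}_c^{n+1},\tau(\bar{e}_c^{n+1}-\sigma_k\bar{e}_c^n))$ becomes the difference of two $G$-forms plus $\|\sum_{i=0}^k\delta_i\bar{e}_c^{n+1+i-k}\|_0^2\ge 0$. Integrating $-(\varDelta\bar{e}_c^{n+1},\tau(\bar{e}_c^{n+1}-\sigma_k\bar{e}_c^n))$ by parts (the homogeneous Neumann condition on $\bar{e}_c$ annihilates the boundary integral) gives $\tau\|\nabla\bar{e}_c^{n+1}\|_0^2-\tau\sigma_k(\nabla\bar{e}_c^{n+1},\nabla\bar{e}_c^n)$, while the reaction term yields $\tau\|\bar{e}_c^{n+1}\|_0^2-\tau\sigma_k(\bar{e}_c^{n+1},\bar{e}_c^n)$. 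Since $0\le\sigma_k<1$, applying Young's inequality to the two cross terms retains a strictly positive multiple of $\tau\|\nabla\bar{e}_c^{n+1}\|_0^2$ and pushes $\tau\|\bar{e}_c^n\|_1^2$-type contributions to the right-hand side. On the source side I would split $(B_k(e_\rho^n)+Q_k^{n+1}(\rho)-R_k^{n+1}(c),\tau(\bar{e}_c^{n+1}-\sigma_k\bar{e}_c^n))$ by Cauchy--Schwarz and Young: the extrapolation $B_k(e_\rho^n)$ is a fixed linear combination of $e_\rho^n,\dots,e_\rho^{n-k+1}$, producing $C\tau\sum_q\|e_\rho^q\|_0^2$, whereas the truncation parts are bounded via \eqref{eq.4.6} together with the regularity of Assumption~\ref{as.2.1} and \eqref{eq.2.4}, contributing $C\tau^{2k}$ after summation. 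Summing over $n$ from $k-1$ collapses the $G$-forms telescopically; bounding the top index from below by \eqref{eq.4.7} and the initial indices by Assumption~\ref{as.3.1} yields \eqref{eq.4.8}.

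For the second estimate \eqref{eq.4.9}, I would instead test \eqref{eq.4.3} with $-\tau\varDelta(\bar{e}_c^{n+1}-\sigma_k\bar{e}_c^n)$, which is admissible because the weighted increment again satisfies a homogeneous Neumann condition, so all boundary integrals arising from integration by parts vanish. This converts the backward-difference term into $(D_{k\tau}\nabla\bar{e}_c^{n+1},\tau\nabla(\bar{e}_c^{n+1}-\sigma_k\bar{e}_c^n))$, to which Lemma~\ref{le.4.1} is applied componentwise in $\nabla\bar{e}_c$, delivering a telescoping $G$-form in the gradients whose top index is bounded below via \eqref{eq.4.7} by $\lambda_{\min}\|\nabla\bar{e}_c^{n+1}\|_0^2$. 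The diffusion term now produces $\tau\|\varDelta\bar{e}_c^{n+1}\|_0^2-\tau\sigma_k(\varDelta\bar{e}_c^{n+1},\varDelta\bar{e}_c^n)$ directly, the reaction term gives $\tau\|\nabla\bar{e}_c^{n+1}\|_0^2$ plus a $\sigma_k$ cross term, and the source term, after integration by parts against $\varDelta$, is estimated in terms of $\|\nabla e_\rho^q\|_0$ and the gradients of the truncation errors (whose bounds follow from the gradient analogues of \eqref{eq.4.6} and the $\|\nabla\partial_{\tilde t}^i c\|_0$, $\|\nabla\partial_{\tilde t}^i\rho\|_0$ regularity in Assumption~\ref{as.2.1} and \eqref{eq.2.4}); this is exactly why \eqref{eq.4.9} carries $\|\nabla e_\rho^q\|_0^2$ and $\|\nabla\bar{e}_c^q\|_1^2$ on its right-hand side. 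Young's inequality and summation then close the estimate as before.

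The main obstacle I anticipate is the bookkeeping required to preserve coercivity: after applying Young's inequality to every $\sigma_k$-weighted cross term, I must verify that the coefficients of the dissipative quantities $\tau\|\nabla\bar{e}_c^{n+1}\|_0^2$ and $\tau\|\varDelta\bar{e}_c^{n+1}\|_0^2$ remain strictly positive (this is precisely where $\sigma_k<1$ and the spectral bounds $\lambda_{\min},\lambda_{\max}$ of $G$ are indispensable), and that the telescoping leaves only initial-index remainders controllable by Assumption~\ref{as.3.1}. Everything else reduces to routine use of \eqref{eq.4.6}, \eqref{eq.4.7}, and Cauchy--Schwarz/Young.
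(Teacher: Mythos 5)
Your proposal takes essentially the same route as the paper's own proof: the paper likewise tests \eqref{eq.4.3} with $\tau(\bar{e}_c^{q+1}-\sigma_k\bar{e}_c^q)$ for \eqref{eq.4.8} and with $-\tau\varDelta(\bar{e}_c^{q+1}-\sigma_k\bar{e}_c^q)$ for \eqref{eq.4.9}, invokes Lemma \ref{le.4.1} to telescope the BDF term, controls the $\sigma_k$-weighted cross terms and the source terms $B_k(e_\rho^q)$, $Q_k^{q+1}(\rho)$, $R_k^{q+1}(c)$ via H\"older/Young and \eqref{eq.4.6} together with Assumption \ref{as.2.1} and \eqref{eq.2.4}, and closes by summing, applying \eqref{eq.4.7} to the top index, and bounding the initial-index $G$-form remainders. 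Your argument is correct, and your explicit attention to the coercivity bookkeeping ($\sigma_k<1$, the eigenvalue bounds of $G$) and to the vanishing Neumann boundary terms only makes explicit what the paper leaves implicit.
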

	\begin{proof}
		For $q=k-1,\cdots,n$, taking the inner product of \eqref{eq.4.3} at $t^{q+1}$ with $\tau(\bar{e}_c^{q+1}-\sigma_k\bar{e}_c^q)$, and applying Green's theorem, we obtain
		\begin{align*}
			&\big(D_{k\tau}\bar{e}_c^{q+1},\tau(\bar{e}_c^{q+1}-\sigma_k\bar{e}_c^q)\big)+\big(\nabla \bar{e}_c^{q+1},\tau\nabla(\bar{e}_c^{q+1}-\sigma_k\bar{e}_c^q)\big)+\big(\bar{e}_c^{q+1},\tau(\bar{e}_c^{q+1}-\sigma_k\bar{e}_c^q)\big)\\ ={}&\big(B_k(e_\rho^q),\tau(\bar{e}_c^{q+1}-\sigma_k\bar{e}_c^q)\big)+\big(Q_k^{q+1}(\rho),\tau(\bar{e}_c^{q+1}-\sigma_k\bar{e}_c^q)\big)-\big(R_k^{q+1}(c),\tau(\bar{e}_c^{q+1}-\sigma_k\bar{e}_c^q)\big).
		\end{align*}
		By applying Lemma \ref{le.4.1}, \eqref{eq.4.6}, Hölder's inequality, and Young's inequality to the above equation, he following derivation is obtained:
		\begin{align*}
			&\sum_{i,j=1}^kg_{ij}(\bar{e}_c^{q+1+i-k},\bar{e}_c^{q+1+j-k})-\sum_{i,j=1}^kg_{ij}(\bar{e}_c^{q+i-k},\bar{e}_c^{q+j-k})\\
			&+\|\sum_{i=0}^k\delta_i\bar{e}_c^{q+1+i-k}\|_0^2+\tau\|\nabla \bar{e}_c^{q+1}\|_0^2+\tau\|\bar{e}_c^{q+1}\|_0^2\\
			={}&\tau\sigma_k(\nabla\bar{e}_c^{q+1},\nabla\bar{e}_c^q)+\tau\sigma_k(\bar{e}_c^{q+1},\bar{e}_c^q)+\tau\big(B_k(e_\rho^q),\bar{e}_c^{q+1}-\sigma_k\bar{e}_c^q\big)\\
			&+\tau\big(Q_k^{q+1}(\rho),\bar{e}_c^{q+1}-\sigma_k\bar{e}_c^q\big)-\tau\big(R_k^{q+1}(c),\bar{e}_c^{q+1}-\sigma_k\bar{e}_c^q\big)\\
			\le{}& \tau\|\nabla \bar{e}_c^{q+1}\|_0\|\nabla \bar{e}_c^q\|_0+\tau\|\bar{e}_c^{q+1}\|_0\|\bar{e}_c^q\|_0+\tau\|B_k(e_\rho^q)\|_0(\|\bar{e}_c^{q+1}\|_0+\|\bar{e}_c^q\|_0)\\
			&+\tau\|Q_k^{q+1}(\rho)\|_0(\|\bar{e}_c^{q+1}\|_0+\|\bar{e}_c^q\|_0)+\tau \|R_k^{q+1}(c)\|_0(\|\bar{e}_c^{q+1}\|_0+\|\bar{e}_c^q\|_0)\\
			\le{}&\frac\tau2\|\nabla\bar{e}_c^{q+1}\|_0^2+C\tau\|\nabla\bar{e}_c^q\|_0^2+
			\tau\|\bar{e}_c^{q+1}\|_0^2+C\tau\|\bar{e}_c^q\|_0^2+C\tau\|B_k(e_\rho^q)\|_0^2\\
			&+C\tau^{2k}\int_{t_{q+1-k}}^{t_{q+1}}\big(\|\partial_t^k\rho\|_0^2+\|\partial_t^{k+1}c\|_0^2\big)\diff t.
		\end{align*}
		Then, by summing the above inequality for $q$ ranging from $k-1$ to $n$, and using Assumption~\ref{as.2.1} along with \eqref{eq.2.4}, the following result is obtained:
		\begin{align}\label{eq.4.10}
			&\sum_{i,j=1}^kg_{ij}(\bar{e}_c^{n+1+i-k},\bar{e}_c^{n+1+j-k})+\sum_{q=k-1}^n\|\sum_{i=0}^k\delta_i\bar{e}_c^{q+1+i-k}\|_0^2+\tau\sum_{q=k-1}^n\|\nabla\bar{e}_c^{q+1}\|_0^2\notag\\
			\le{}&C\sum_{i,j=1}^k\|\bar{e}_c^{i-1}\|_0\|\bar{e}_c^{j-1}\|_0+C\tau\sum_{q=k-1}^n\|\bar{e}_c^{q}\|_1^2+C\tau\sum_{q=k-1}^n\|B_k(e_\rho^q)\|_0^2\notag\\
			&+C\tau^{2k}\int_{0}^{t_{n+1}}\big(\|\partial_t^k\rho\|_0^2+\|\partial_t^{k+1}c\|_0^2\big)\diff t\notag\\
			\le{}&C\tau^{2k}+C\tau\sum_{q=k-1}^n\|\bar{e}_c^{q}\|_1^2+C\tau\sum_{q=0}^n\|e_\rho^q\|_0^2.
		\end{align}
		Substituting \eqref{eq.4.7} into \eqref{eq.4.10} and dropping some unnecessary terms, we immediately obtain \eqref{eq.4.8}.
		
		\eqref{eq.4.9} can be easily derived in an analogous manner by taking the inner product of \eqref{eq.4.3} at $t^{q+1}$ with $-\tau\varDelta(\bar{e}_c^{q+1}-\sigma_k\bar{e}_c^q)$.
	\end{proof}
	\begin{lemma}\label{le.4.3}
		Under the Assumption \ref{as.2.1}, for any $n=k-1,\cdots,N-1$, we have
		\begin{align}
			\|e_u^{n+1}\|_0^2&+\tau\sum_{q=k-1}^n\|\nabla e_u^{q+1}\|_0^2\le C\tau^{2k}+C\tau\sum_{q=0}^n\|\nabla e_u^q\|_0^2+C\tau\sum_{q=0}^n\|\nabla e_c^q\|_0^2\notag\\
			&+C\tau\sum_{q=k-1}^n\|\nabla\bar{e}_c^{q+1}\|_0^2+C\tau\sum_{q=k-1}^n\big(1+\|\nabla B_k(u^q)\|_0^2\|\varDelta B_k(u^q)\|_0^2\notag\\
			&+\|\nabla B_k(c^q)\|_0^2\|\varDelta B_k(c^q)\|_0^2\big)\big(\|e_u^{q+1}\|_0^2+\|e_u^q\|_0^2\big),\label{eq.4.11}\\
			\|\nabla e_u^{n+1}\|_0^2&+\tau\sum_{q=k-1}^n \|\varDelta e_u^{q+1}\|_0^2\le C\tau^{2k}+C\tau\sum_{q=0}^n\|\nabla e_u^q\|_1^2+C\tau\sum_{q=0}^n \|\nabla e_c^q\|_1^2\notag\\
			&+C\tau\sum_{q=k-1}^n\|\varDelta\bar{e}_c^{q+1}\|_0^2+C\tau\sum_{q=k-1}^n\big(1+\|\nabla B_k(u^q)\|_0^2\|\varDelta B_k(u^q)\|_0^2\notag\\
			&+\|\nabla B_k(c^q)\|_0^2\|\varDelta B_k(c^q)\|_0^2\big)\|\nabla e_u^{q+1}\|_0^2.\label{eq.4.12}
		\end{align}
	\end{lemma}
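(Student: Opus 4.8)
The plan is to mirror the energy argument carried out in the proof of Lemma~\ref{le.4.2}, now applied to the error equation \eqref{eq.4.4} for $e_u$. For \eqref{eq.4.11}, I would fix $q$ with $k-1\le q\le n$, evaluate \eqref{eq.4.4} at $t^{q+1}$, and take the $L^2$ inner product with the test function $\tau(e_u^{q+1}-\sigma_k e_u^q)$. Green's theorem applied to $-\varDelta e_u^{q+1}$ converts the diffusion term into $\tau\|\nabla e_u^{q+1}\|_0^2$ plus the cross term $-\tau\sigma_k(\nabla e_u^{q+1},\nabla e_u^q)$, while Lemma~\ref{le.4.1} rewrites the discrete time-derivative contribution as the telescoping $G$-quadratic form plus a nonnegative square. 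The objective is then to bound the right-hand side so that every occurrence of $\nabla e_u^{q+1}$ is either absorbed into the dissipation $\tau\|\nabla e_u^{q+1}\|_0^2$ or pushed down to a lower time level.

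I would split the right-hand side into three groups. The truncation pieces $Q_k^{q+1}(u)$, $Q_k^{q+1}(c)$ and $R_k^{q+1}(u)$ are controlled by Cauchy--Schwarz together with \eqref{eq.4.6}, and after summation and Assumption~\ref{as.2.1} they contribute the $C\tau^{2k}$ term. The terms pairing $\nabla u(t^{q+1})$ with $\nabla B_k(e_u^q)$ and $\nabla B_k(e_c^q)$ are handled through the trilinear estimate \eqref{eq.2.1a}, the Gagliardo--Nirenberg inequality \eqref{eq.2.2a}, and the \emph{a priori} regularity bound on $\|u(t^{q+1})\|_2$ from Assumption~\ref{as.2.1}; after Young's inequality these generate the $\|\nabla e_u^q\|_0^2$ and $\|\nabla e_c^q\|_0^2$ sums, with the surplus gradient of $e_u^{q+1}$ absorbed. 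The coupling term $-\varDelta\bar{e}_c^{q+1}$ is integrated by parts against the test function to give $(\nabla\bar{e}_c^{q+1},\nabla(e_u^{q+1}-\sigma_k e_u^q))$, which Young's inequality splits into the $\|\nabla\bar{e}_c^{q+1}\|_0^2$ sum and an absorbable gradient term.

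The hard part will be the genuinely nonlinear terms $(\nabla e_u^{q+1}\cdot\nabla B_k(u^q),e_u^{q+1}-\sigma_k e_u^q)$ and its $c$-analogue, in which the unknown $e_u^{q+1}$ appears at the current time level inside a product with the numerical solution. Here I would apply \eqref{eq.2.1a} to obtain the bound $\|\nabla e_u^{q+1}\|_0\,\|\nabla B_k(u^q)\|_{L^4}\,\|e_u^{q+1}-\sigma_k e_u^q\|_{L^4}$, then use \eqref{eq.2.2a} on both $L^4$ factors, with $\|\nabla B_k(u^q)\|_{L^4}\le C\|\nabla B_k(u^q)\|_0^{1/2}\|\varDelta B_k(u^q)\|_0^{1/2}$. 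This leaves a product of $\|\nabla e_u^{q+1}\|_0^{3/2}$ with $\|e_u^{q+1}-\sigma_k e_u^q\|_0^{1/2}$ and the factor $\|\nabla B_k(u^q)\|_0^{1/2}\|\varDelta B_k(u^q)\|_0^{1/2}$; applying Young's inequality with the conjugate exponents $4/3$ and $4$ absorbs the gradient into $\tfrac{\tau}{2}\|\nabla e_u^{q+1}\|_0^2$ and raises the remaining factor to the fourth power, yielding exactly the weight $\|\nabla B_k(u^q)\|_0^2\|\varDelta B_k(u^q)\|_0^2$ multiplying $\|e_u^{q+1}\|_0^2+\|e_u^q\|_0^2$. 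Matching these exponents correctly, so that the current-level gradient is fully absorbed while the prefactor lands precisely on the $\varDelta B_k$ norm demanded by the statement, is the delicate point of the estimate.

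Finally I would sum over $q$ from $k-1$ to $n$, invoke \eqref{eq.4.7} to bound $\|e_u^{n+1}\|_0^2$ from below by the $G$-form, and use Assumption~\ref{as.3.1} to absorb the starting-step contributions into $C\tau^{2k}$, which gives \eqref{eq.4.11}. The gradient estimate \eqref{eq.4.12} follows the same scheme with the test function $-\tau\varDelta(e_u^{q+1}-\sigma_k e_u^q)$: the diffusion term now supplies the dissipation $\tau\|\varDelta e_u^{q+1}\|_0^2$, every right-hand side term is treated one derivative higher (so the regularity sums appear as $\|\nabla e_u^q\|_1^2$, $\|\nabla e_c^q\|_1^2$ and $\|\varDelta\bar{e}_c^{q+1}\|_0^2$), and the nonlinear terms produce the same $\|\nabla B_k\|_0^2\|\varDelta B_k\|_0^2$ weight now multiplying $\|\nabla e_u^{q+1}\|_0^2$.
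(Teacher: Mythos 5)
Your proposal is correct and takes essentially the same route as the paper's proof: the same test functions $\tau(e_u^{q+1}-\sigma_k e_u^q)$ and $-\tau\varDelta(e_u^{q+1}-\sigma_k e_u^q)$, Lemma \ref{le.4.1} for the BDF telescoping, the trilinear/Gagliardo--Nirenberg estimates, and exactly the $4/3$--$4$ Young split that produces the weight $\|\nabla B_k(\cdot)\|_0^2\|\varDelta B_k(\cdot)\|_0^2$ on the critical terms $b\big(e_u^{q+1},B_k(u^q),\cdot\big)$ and $b\big(e_u^{q+1},B_k(c^q),\cdot\big)$. The only trivial discrepancy is a label swap: for the terms pairing $\nabla u(t^{q+1})$ with the extrapolated errors, the paper applies \eqref{eq.2.1b} (putting the $L^4$ norm on $\nabla u(t^{q+1})$, controlled by the $\|u\|_2$ regularity), which is what your described use of Assumption \ref{as.2.1} actually corresponds to, rather than \eqref{eq.2.1a}.
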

	\begin{proof}
		For $q=k-1,\cdots,n$, taking the inner product of \eqref{eq.4.4} at $t^{q+1}$ with $\tau(e_u^{q+1}-\sigma_ke_u^q)$ and using Lemma \ref{le.4.1} leads to
		\begin{align}\label{eq.4.13}
			&\sum_{i,j=1}^kg_{ij}(e_u^{q+1+i-k},e_u^{q+1+j-k})-\sum_{i,j=1}^kg_{ij}(e_u^{q+i-k},e_u^{q+j-k})+\|\sum_{i=0}^k\delta_ie_u^{q+1+i-k}\|_0^2+\tau\|\nabla e_u^{q+1}\|_0^2\notag \\
			={}&b\big(u(t^{q+1}),Q_k^{q+1}(u)+B_k(e_u^q),\tau(e_u^{q+1}-\sigma_ke_u^q)\big)+b\big(e_u^{q+1},B_k(u^q),\tau(e_u^{q+1}-\sigma_ke_u^q)\big)\notag\\
			&-b\big(u(t^{q+1}),Q_k^{q+1}(c)+B_k(e_c^q),\tau(e_u^{q+1}-\sigma_ke_u^q)\big)-b\big(e_u^{q+1},B_k(c^q),\tau(e_u^{q+1}-\sigma_ke_u^q)\big)\notag\\
			&+a\big(\bar{e}_c^{q+1},\tau(e_u^{q+1}-\sigma_ke_u^q)\big)-\big(R_k^{q+1}(u),\tau(e_u^{q+1}-\sigma_ke_u^q)\big)+a\big(e_u^{q+1},\tau\sigma_ke_u^q\big).
		\end{align}
		Next, Hölder's and Young's inequalities are applied to estimate the terms on the right-hand side of \eqref{eq.4.13}. First, applying \eqref{eq.4.6a} to the final three linear terms, we have
		\begin{align*}
			&|a\big(\bar{e}_c^{q+1},\tau(e_u^{q+1}-\sigma_ke_u^q)\big)|+|a(e_u^{q+1},\tau\sigma_ke_u^q)|\\
			&\le\frac\tau4\|\nabla e_u^{q+1}\|_0^2+C\tau\|\nabla e_u^{q}\|_0^2+C\tau\|\nabla\bar{e}_c^{q+1}\|_0^2,
		\end{align*}
		and
		\begin{align*}
			|-\big(R_k^{q+1}(u),\tau(e_u^{q+1}-\sigma_ke_u^q)\big)|\le C\tau(\|e_u^{q+1}\|_0^2+\|e_u^q\|_0^2)+C\tau^{2k}\int_{t_{q+1-k}}^{t_{q+1}} \|\partial_t^{k+1}u\|_0^2 \diff t.
		\end{align*}
		Then, for the first and third nonlinear terms, using \eqref{eq.2.1b}, \eqref{eq.2.2a}, Assumption \ref{as.2.1} and \eqref{eq.4.6b}, we derive
		\begin{align*}
			&|b\big(u(t^{q+1}),Q_k^{q+1}(u)+B_k(e_u^q),\tau(e_u^{q+1}-\sigma_ke_u^q)\big)|\\
			&+|-b\big(u(t^{q+1}),Q_k^{q+1}(c)+B_k(e_c^q),\tau(e_u^{q+1}-\sigma_ke_u^q)\big)|\\
			\le{}&\tau\|\nabla u(t^{q+1})\|_{L^4}\big(\|\nabla\big(Q_k^{q+1}(u)+B_k(e_u^q)\big)\|_0\\
			&+\|\nabla\big(Q_k^{q+1}(c)+B_k(e_c^q)\big)\|_0\big)\|e_u^{q+1}-\sigma_ke_u^q\|_{L^4}\\
			\le{}&C\tau\big(\|\nabla Q_k^{q+1}(u)\|_0+\|\nabla B_k(e_u^q)\|_0+\|\nabla Q_k^{q+1}(c)\|_0\\
			&+\|\nabla B_k(e_c^q)\|_0\big)\|e_u^{q+1}-\sigma_ke_u^q\|_0^{\frac12}\|\nabla (e_u^{q+1}-\sigma_ke_u^q)\|_0^{\frac12}\\
			\le{}&\frac\tau4\|\nabla e_u^{q+1}\|_0^2+C\tau\|\nabla e_u^q\|_0^2+C\tau\|e_u^{q+1}\|_0^2+C\tau\|e_u^q\|_0^2\\
			&+C\tau\|\nabla B_k(e_u^q)\|_0^2+C\tau\|\nabla B_k(e_c^q)\|_0^2\\
			&+C\tau^{2k}\int_{t_{q+1-k}}^{t_{q+1}}\big(\|\nabla \partial_t^ku\|_0^2+\|\nabla\partial_t^kc\|_0^2\big)\diff t.
		\end{align*}
		Finally, combining \eqref{eq.2.1a} with \eqref{eq.2.2a}, the remaining two terms can be bounded by
		\begin{align*}
			&|b\big(e_u^{q+1},B_k(u^q),\tau(e_u^{q+1}-\sigma_ke_u^q)\big)|+ |-b\big(e_u^{q+1},B_k(c^q),\tau(e_u^{q+1}-\sigma_ke_u^q)\big)|\\
			\le{}& \tau\|\nabla e_u^{q+1}\|_0\big(\|\nabla B_k(u^q)\|_{L^4}+\|\nabla B_k(c^q)\|_{L^4}\big)\|e_u^{q+1}-\sigma_ke_u^q\|_{L^4}\\
			\le{}& \frac\tau4\|\nabla e_u^{q+1}\|_0^2+C\tau\|\nabla e_u^q\|_0^2+C\tau\big(\|\nabla B_k(u^q)\|_0^2 \|\varDelta B_k(u^q)\|_0^2\\
			&+\|\nabla B_k(c^q)\|_0^2\|\varDelta B_k(c^q)\|_0^2\big)\big(\|e_u^{q+1}\|_0^2+\|e_u^q\|_0^2\big).
		\end{align*}
		Putting above estimates into \eqref{eq.4.13}, taking the sum from $q=k-1$ to $n$, and using Assumption \ref{as.2.1} and \eqref{eq.4.7}, we derive \eqref{eq.4.11}.
		
		Next, we deduce \eqref{eq.4.12} in a similar manner as we did to conclude \eqref{eq.4.11}. Letting \eqref{eq.4.4} take the inner product with $-\tau\varDelta(e_u^{q+1}-\sigma_ke_u^q)$ and applying Lemma \ref{le.4.1}, the following result is obtained:
		\begin{align}\label{eq.4.14}
			&\sum_{i,j=1}^kg_{ij}(\nabla e_u^{q+1+i-k},\nabla e_u^{q+1+j-k})-\sum_{i,j=1}^kg_{ij}(\nabla e_u^{q+i-k},\nabla e_u^{q+j-k})\notag\\
			&+\|\sum_{i=0}^k\delta_i\nabla e_u^{q+1+i-k}\|_0^2+\tau\|\varDelta e_u^{q+1}\|_0^2\notag\\
			=&-b\big(u(t^{q+1}),Q_k^{q+1}(u)+B_k(e_u^q),\tau\varDelta(e_u^{q+1}-\sigma_ke_u^q)\big)\notag\\
			&-b\big(e_u^{q+1},B_k(u^q),\tau\varDelta(e_u^{q+1}-\sigma_ke_u^q)\big)\notag\\
			&+b\big(u(t^{q+1}),Q_k^{q+1}(c)+B_k(e_c^q),\tau\varDelta(e_u^{q+1}-\sigma_ke_u^q)\big)\notag\\
			&+b\big(e_u^{q+1},B_k(c^q),\tau\varDelta(e_u^{q+1}-\sigma_ke_u^q)\big)+\big(\varDelta\bar{e}_c^{q+1},\tau\varDelta(e_u^{q+1}-\sigma_ke_u^q)\big)\notag\\
			&+\big(R_k^{q+1}(u),\tau\varDelta(e_u^{q+1}-\sigma_ke_u^q)\big)+\big(\varDelta e_u^{q+1},\tau\sigma_k\varDelta e_u^q\big).
		\end{align}
		Using \eqref{eq.2.1c}, \eqref{eq.2.2a}, Assumption \ref{as.2.1} and \eqref{eq.4.6b}, we obtain the following bounds for the first four terms on the right-hand side of \eqref{eq.4.14}:
		\begin{align*}
			&|-b\big(u(t^{q+1}),Q_k^{q+1}(u)+B_k(e_u^q),\tau\varDelta(e_u^{q+1}-\sigma_ke_u^q)\big)|\\
			&+|b\big(u(t^{q+1}),Q_k^{q+1}(c)+B_k(e_c^q),\tau\varDelta(e_u^{q+1}-\sigma_ke_u^q)\big)|\\
			\le{}&\tau\|\nabla u(t^{q+1})\|_{L^4}\big(\|\nabla\big(Q_k^{q+1}(u)+B_k(e_u^q)\big)\|_{L_4}\\
			&+\|\nabla\big(Q_k^{q+1}(c)+B_k(e_c^q)\big)\|_{L_4}\big)\|\varDelta(e_u^{q+1}-\sigma_ke_u^q)\|_0\\
			\le{}&\frac\tau4\|\varDelta e_u^{q+1}\|_0^2+C\tau\|\varDelta e_u^q\|_0^2+C\tau\|\nabla B_k(e_u^q)\|_0^2\\
			&+C\tau\|\varDelta B_k(e_u^q)\|_0^2+C\tau\|\nabla B_k(e_c^q)\|_0^2+C\tau\|\varDelta B_k(e_c^q)\|_0^2\\
			&+C\tau^{2k}\int_{t_{q+1-k}}^{t_{q+1}}\big(\|\nabla \partial_t^ku\|_0^2+\|\nabla \partial_t^kc\|_0^2+\|\varDelta\partial_t^ku\|_0^2+\|\varDelta\partial_t^kc\|_0^2\big)\diff t,
		\end{align*}
		and
		\begin{align*}
			&|-b\big(e_u^{q+1},B_k(u^q),\tau\varDelta(e_u^{q+1}-\sigma_ke_u^q)\big)|+|b\big(e_u^{q+1},B_k(c^q),\tau\varDelta(e_u^{q+1}-\sigma_ke_u^q)\big)|\\
			\le{}&\tau\|\nabla e_u^{q+1}\|_{L^4}\big(\|\nabla B_k(u^q)\|_{L^4}+\|\nabla B_k(c^q)\|_{L^4}\big)\|\varDelta(e_u^{q+1}-\sigma_ke_u^q)\|_0\\
			\le{}&C\tau\|\nabla e_u^{q+1}\|_0^{\frac12} \|\varDelta e_u^{q+1}\|_0^{\frac12} \big(\|\nabla B_k(u^q)\|_0^{\frac12}\|\varDelta B_k(u^q)\|_0^{\frac12}\\
			&+\|\nabla B_k(c^q)\|_0^{\frac12}\|\varDelta B_k(c^q)\|_0^{\frac12}\big)\big(\|\varDelta e_u^{q+1}\|_0+\|\varDelta e_u^q\|_0\big)\\
			\le{}& \frac\tau4\|\varDelta e_u^{q+1}\|_0^2+C\tau\|\varDelta e_u^q\|_0^2+C\tau\big(\|\nabla B_k(u^q)\|_0^2\|\varDelta B_k(u^q)\|_0^2\\
			&+\|\nabla B_k(c^q)\|_0^2\|\varDelta B_k(c^q)\|_0^2\big)\|\nabla e_u^{q+1}\|_0^2.
		\end{align*}
		Utilizing Hölder's inequality and Young's inequality to the final three terms in \eqref{eq.4.14}, we have
		\begin{align*}
			&|\big(\varDelta\bar{e}_c^{q+1},\tau\varDelta(e_u^{q+1}-\sigma_ke_u^q)\big)|+|(\varDelta e_u^{q+1},\tau\sigma_k\varDelta e_u^q)|\\
			\le{}&\frac\tau8\|\varDelta e_u^{q+1}\|_0^2+C\tau\|\varDelta e_u^q\|_0^2+C\tau\|\varDelta\bar{e}_c^{q+1}\|_0^2,
		\end{align*}
		and
		\begin{align*}
			|\big(R_u^{q+1},\tau\varDelta(e_u^{q+1}-\sigma_ke_u^q)\big)|\le\frac\tau8\|\varDelta e_u^{q+1}\|_0^2+C\tau\|\varDelta e_u^q\|_0^2+C\tau^{2k}\int_{t_{q+1-k}}^{t_{q+1}}\|\partial_t^{k+1}u\|_0^2\diff t.
		\end{align*}
		Substituting the above inequalities into \eqref{eq.4.14}, summing from $q=k-1$ to $n$, and using Assumption \ref{as.2.1} along with \eqref{eq.4.7}, we derive \eqref{eq.4.12}. The proof is completed.
	\end{proof}	
	\begin{lemma}\label{le.4.4}
		Under  Assumption \ref{as.2.1}, for any $n=k-1,\cdots,N-1$, we have
		\begin{align*}
			\|\bar{e}_\rho^{n+1}\|_0\le{}& C\exp\big(\max\{\|u(t^{n+1})\|_\infty,\|u^{n+1}\|_\infty\}\big)\|e_u^{n+1}\|_0,\\
			\|\nabla\bar{e}_\rho^{n+1}\|_0\le{}&C\big(\|\nabla e_u^{n+1}\|_0+\exp(\max\{\|u(t^{n+1})\|_\infty,\|u^{n+1}\|_\infty\})\|e_u^{n+1}\|_{L^4}\|\nabla u^{n+1}\|_{L^4}\big),\\
			|1-\lambda^{n+1}|\le{}& C\big(|1-\lambda^{n+1}|\|\bar{e}_\rho^{n+1}\|_0+\|\bar{e}_\rho^{n+1}\|_0+\|e_\rho^n\|_0\big),\\
			\|e_\rho^{n+1}\|_0\le{}& C\big(|1-\lambda^{n+1}|+|1-\lambda^{n+1}|\|\bar{e}_\rho^{n+1}\|_0+\|\bar{e}_\rho^{n+1}\|_0\big),\\
			\|\nabla e_\rho^{n+1}\|_0\le{}& C\big(|1-\lambda^{n+1}|+|1-\lambda^{n+1}|\|\nabla\bar{e}_\rho^{n+1}\|_0+\|\nabla\bar{e}_\rho^{n+1}\|_0\big).
		\end{align*}
	\end{lemma}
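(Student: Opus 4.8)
The plan is to establish the five bounds in three groups, since they all reduce to elementary manipulations of the recovery step \eqref{eq.3.3}--\eqref{eq.3.4}. For the first two inequalities (the $L^2$ and $H^1$ control of $\bar{e}_\rho^{n+1}$), the starting point is the pointwise Lipschitz estimate for the exponential: because $\exp(a)-\exp(b)=(a-b)\int_0^1\exp(b+s(a-b))\diff s$, one has $|\exp(a)-\exp(b)|\le\exp(\max\{a,b\})|a-b|$. Applying this with $a=u(t^{n+1})$, $b=u^{n+1}$ and recalling $\bar{e}_\rho^{n+1}=\exp(u(t^{n+1}))-\exp(u^{n+1})$, I would take the $L^2(\Omega)$ norm and pull the exponential out in $L^\infty$ to get the first bound. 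For the gradient I would split $\nabla\bar{e}_\rho^{n+1}=\exp(u(t^{n+1}))\nabla e_u^{n+1}+\big(\exp(u(t^{n+1}))-\exp(u^{n+1})\big)\nabla u^{n+1}$, bound the first term using $\|\exp(u(t^{n+1}))\|_\infty=\|\rho(t^{n+1})\|_\infty\le C$ (from \eqref{eq.2.2b} and \eqref{eq.2.4}), and bound the second by Hölder's inequality in $L^4\times L^4$ together with the pointwise exponential estimate, giving the stated form.

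The bound on $|1-\lambda^{n+1}|$ is the crux, and it is where I expect the only genuine obstacle. The naive route---solving \eqref{eq.3.4} for $\lambda^{n+1}$ and dividing by $\int_\Omega\bar{\rho}^{n+1}\diff\mathbf{x}$---is awkward because no a priori positive lower bound on that denominator is available. Instead I would multiply \eqref{eq.3.4} through to get $(1-\lambda^{n+1})\int_\Omega\bar{\rho}^{n+1}\diff\mathbf{x}=\int_\Omega\bar{\rho}^{n+1}\diff\mathbf{x}-\int_\Omega\rho^n\diff\mathbf{x}$, and then use the exact mass conservation \eqref{eq.1.2}, i.e.\ $\int_\Omega\rho(t^{n+1})\diff\mathbf{x}=\int_\Omega\rho(t^n)\diff\mathbf{x}=M:=\int_\Omega\rho_0\diff\mathbf{x}$, to write $\int_\Omega\bar{\rho}^{n+1}\diff\mathbf{x}=M-\int_\Omega\bar{e}_\rho^{n+1}\diff\mathbf{x}$ and $\int_\Omega\rho^n\diff\mathbf{x}=M-\int_\Omega e_\rho^n\diff\mathbf{x}$. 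Substituting the first of these on the left-hand side as well turns the identity into
\begin{align*}
(1-\lambda^{n+1})M=(1-\lambda^{n+1})\int_\Omega\bar{e}_\rho^{n+1}\diff\mathbf{x}-\int_\Omega\bar{e}_\rho^{n+1}\diff\mathbf{x}+\int_\Omega e_\rho^n\diff\mathbf{x}.
\end{align*}
Dividing by the fixed positive constant $M$ (positive by Assumption \ref{as.2.1}) and applying $|\int_\Omega f\diff\mathbf{x}|\le|\Omega|^{1/2}\|f\|_0$ to each integral produces exactly the third estimate; the self-referential term $|1-\lambda^{n+1}|\|\bar{e}_\rho^{n+1}\|_0$ is precisely what the denominator substitution contributes, and it is harmless since it will be absorbed into the left-hand side once $\|\bar{e}_\rho^{n+1}\|_0$ is known to be $O(\tau^k)$.

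Finally, the bounds on $e_\rho^{n+1}$ and $\nabla e_\rho^{n+1}$ follow from the decomposition $e_\rho^{n+1}=\rho(t^{n+1})-\lambda^{n+1}\bar{\rho}^{n+1}=\bar{e}_\rho^{n+1}+(1-\lambda^{n+1})\bar{\rho}^{n+1}$. Inserting $\bar{\rho}^{n+1}=\rho(t^{n+1})-\bar{e}_\rho^{n+1}$ gives $e_\rho^{n+1}=\bar{e}_\rho^{n+1}+(1-\lambda^{n+1})\rho(t^{n+1})-(1-\lambda^{n+1})\bar{e}_\rho^{n+1}$, and the triangle inequality with $\|\rho(t^{n+1})\|_0\le C$ from \eqref{eq.2.4} yields the fourth estimate. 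Since $\lambda^{n+1}$ is a scalar, the gradient passes through $\lambda^{n+1}\bar{\rho}^{n+1}$ unchanged, so the same decomposition applied to $\nabla e_\rho^{n+1}$, now using $\|\nabla\rho(t^{n+1})\|_0\le C$, gives the fifth estimate. Apart from the denominator manoeuvre in the third bound, every step is a direct consequence of the exponential Lipschitz property and the regularity bound \eqref{eq.2.4}.
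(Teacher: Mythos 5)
Your proposal is correct and is essentially the argument the paper intends: the paper's own proof is merely a citation to Lemma 3.3 of \cite{bib34} combined with the exact mass conservation \eqref{eq.1.2}, and your three ingredients --- the pointwise Lipschitz bound for the exponential, the mass-conservation identity $(1-\lambda^{n+1})M=(1-\lambda^{n+1})\int_\Omega\bar{e}_\rho^{n+1}\diff\mathbf{x}-\int_\Omega\bar{e}_\rho^{n+1}\diff\mathbf{x}+\int_\Omega e_\rho^n\diff\mathbf{x}$, and the recovery decomposition $e_\rho^{n+1}=\bar{e}_\rho^{n+1}+(1-\lambda^{n+1})\bar{\rho}^{n+1}$ --- are exactly the details being delegated to that reference. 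The term-by-term agreement of your bounds with the lemma's statement, in particular the self-referential terms $|1-\lambda^{n+1}|\|\bar{e}_\rho^{n+1}\|_0$ that arise precisely from your denominator substitution, confirms you have reconstructed the intended proof.
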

	\begin{proof}
		The proof of Lemma \ref{le.4.4} adopts a similar approach to that used in the proof of Lemma 3.3 presented in \cite{bib34}, while taking into account the mass conservation property given by equation \eqref{eq.1.2}.
	\end{proof}
	
	\begin{lemma}\label{le.4.5}
		Under the Assumption \ref{as.2.1}, for any $n=k-1,\cdots,N-1$, we have
		\begin{align*}
			|1-\mu^{n+1}|\le{}&C\tau^k+ C|1-\mu^{n+1}|\big(\|\bar{e}_c^{n+1}\|_0+\|\nabla\bar{e}_c^{n+1}\|_0+\|\bar{e}_c^{n+1}\|_1^2\big)\\
			&+C\Big(\big(\|\bar{e}_c^{n+1}\|_0+\|e_u^{n+1}\|_0+\max\{1/\lambda^{n+1},1\}|1-\lambda^{n+1}|\big)\|\rho^{n+1}\|_0\notag\\
			&+\|e_\rho^{n+1}\|_0+\|\bar{e}_c^{n+1}\|_0+\|\nabla\bar{e}_c^{n+1}\|_0+\|\bar{e}_c^{n+1}\|_1^2\Big)\notag\\
			&+C\sum_{j=n+1-k}^{n}\Big(\big(\|\bar{e}_c^j\|_0+\|e_u^j\|_0+\max\{1/\lambda^j,1\}|1-\lambda^j|\big)\|\rho^j\|_0\notag\\
			&+\|e_\rho^j\|_0+\|e_c^j\|_0+\|\nabla e_c^j\|_0+\|e_c^j\|_1^2\Big)\notag\\
			&+C\tau\|\rho^{n+1}\|_{\infty}\big(\|\nabla e_u^{n+1}\|_0+\|\nabla\bar{e}_c^{n+1}\|_0+\|\nabla e_u^{n+1}\|_0^2+\|\nabla\bar{e}_c^{n+1}\|_0^2\big)\notag\\
			&+C\tau\big(\|\varDelta\bar{e}_c^{n+1}\|_0+\|B_k(e_\rho^n)\|_0+\|\varDelta\bar{e}_c^{n+1}\|_0^2+\|B_k(e_\rho^n)\|_0^2\big)\notag.
		\end{align*}
	\end{lemma}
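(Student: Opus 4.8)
The plan is to treat \eqref{eq.3.5} as a scalar identity that is \emph{affine} in the single unknown $\mu^{n+1}$ and to calibrate it against the exact energy law \eqref{eq.1.3}. Write $\mathcal{D}^{n+1}:=\int_\Omega\rho^{n+1}|\nabla(\log\rho^{n+1}-\bar c^{n+1})|^2+|D_{k\tau}\bar c^{n+1}|^2\diff\mathbf{x}$ for the discrete dissipation, and note that $\mathcal{D}^{n+1}$ is independent of $\mu^{n+1}$, so $E^{n+1}$ is affine in $\mu^{n+1}$ with slope $S^{n+1}:=\tfrac12\|\bar c^{n+1}\|_1^2$. Introducing the exact energy $E(t):=E(\rho(t),c(t))$ and exact dissipation $\mathcal{D}(t)$, the continuous law \eqref{eq.1.3} together with the BDF truncation $R_k^{n+1}(E):=\frac{\diff E}{\diff t}(t_{n+1})-D_{k\tau}E(t_{n+1})$ gives $\alpha_kE(t_{n+1})-A_k(E(t_n))=-\tau\mathcal{D}(t_{n+1})-\tau R_k^{n+1}(E)$. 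Subtracting this from \eqref{eq.3.5} and splitting off the top level through $E^{n+1}-E(t_{n+1})=(\mu^{n+1}-1)S^{n+1}+\Phi^{n+1}$, where $\Phi^{n+1}$ is the numerical energy evaluated with $\mu\equiv1$ minus $E(t_{n+1})$, yields
\[
\alpha_k(\mu^{n+1}-1)S^{n+1}=-\alpha_k\Phi^{n+1}+A_k\big(E^n-E(t_n)\big)-\tau\big(\mathcal{D}^{n+1}-\mathcal{D}(t_{n+1})\big)+\tau R_k^{n+1}(E).
\]

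Next I would normalize by a quantity bounded away from zero. Replacing $S^{n+1}$ by the exact slope $S_*^{n+1}:=\tfrac12\|c(t_{n+1})\|_1^2$ and moving the remainder $(\mu^{n+1}-1)(S^{n+1}-S_*^{n+1})$ to the right, the bound $|S^{n+1}-S_*^{n+1}|\le C(\|\bar{e}_c^{n+1}\|_0+\|\nabla\bar{e}_c^{n+1}\|_0+\|\bar{e}_c^{n+1}\|_1^2)$ (obtained by writing $\bar c^{n+1}=c(t_{n+1})-\bar{e}_c^{n+1}$ and applying \eqref{eq.2.2a} to the quadratic remainder) produces the self-referential term $C|1-\mu^{n+1}|(\cdots)$. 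The decisive ingredient is the uniform lower bound $S_*^{n+1}\ge\delta>0$: integrating \eqref{eq.2.3a} over $\Omega$ with the Neumann condition and mass conservation gives $\frac{\diff}{\diff t}\int_\Omega c=-\int_\Omega c+\int_\Omega\rho_0$, so $\int_\Omega c(t)$ stays positive and bounded below under Assumption~\ref{as.2.1}, whence $\|c(t_{n+1})\|_1\ge\|c(t_{n+1})\|_0\ge\delta$. Dividing by $\alpha_kS_*^{n+1}$ then leaves only the estimation of the four terms on the right.

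For the remaining estimates I would expand each energy difference by the mean value theorem, using the log-transformation identities $\log\rho^{m}=\log\lambda^{m}+u^{m}$ and $\nabla\log\rho^{m}=\nabla u^{m}$, the recovery relation with mass conservation (so $\int_\Omega\rho^m\log\lambda^m=(\log\lambda^m)\int_\Omega\rho_0$ with $|\log\lambda^m|\le\max\{1/\lambda^m,1\}|1-\lambda^m|$), and $(c^m)^2=\mu^m(\bar c^m)^2$. For $\Phi^{n+1}$ the quadratic $c$-terms are expanded via $\bar c^{n+1}=c(t_{n+1})-\bar{e}_c^{n+1}$, and the cross and entropy terms give exactly the $\|\rho^{n+1}\|_0$-weighted group together with $\|e_\rho^{n+1}\|_0,\|\bar{e}_c^{n+1}\|_0,\|\nabla\bar{e}_c^{n+1}\|_0,\|\bar{e}_c^{n+1}\|_1^2$; each summand of the fixed linear combination $A_k(E^n-E(t_n))$ over $j=n+1-k,\dots,n$ is treated identically (now with $c^j$, hence $e_c^j$) to reproduce the $j$-sum. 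The dissipation difference is factored as $(\text{sum})\times(\text{difference})$: since $\nabla(\log\rho^{n+1}-\bar c^{n+1})-\nabla(\log\rho(t_{n+1})-c(t_{n+1}))=\nabla\bar{e}_c^{n+1}-\nabla e_u^{n+1}$ one obtains the $C\tau\|\rho^{n+1}\|_\infty(\cdots)$ group, while \eqref{eq.3.1} and \eqref{eq.2.3a} show $D_{k\tau}\bar c^{n+1}-c_t(t_{n+1})$ is controlled by $\varDelta\bar{e}_c^{n+1}$, $B_k(e_\rho^n)$ and a truncation, giving the final $C\tau(\cdots)$ group; lastly $\tau R_k^{n+1}(E)=O(\tau^{k+1})$ by the regularity in Assumption~\ref{as.2.1}, which is absorbed into $C\tau^{k}$.

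The hardest part will be twofold: establishing the uniform lower bound on $S_*^{n+1}$ — equivalently the $\mathbb{R}^+$-solvability of $\mu^{n+1}$ asserted in Theorem~\ref{th.3.2}, which rests on the positivity of $\int_\Omega c(t)$ propagated from $\int_\Omega c_0>0$ — and the bookkeeping of the entropy term $\rho\log\rho$ through the logarithmic transformation, where the recovery factor $\lambda^{m}$ must be tracked precisely to recover the $\max\{1/\lambda^m,1\}|1-\lambda^m|$ weights rather than a cruder bound.
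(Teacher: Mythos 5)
Your proposal is correct and takes essentially the same route as the paper's proof: your calibrated affine identity for $\alpha_k(\mu^{n+1}-1)S^{n+1}$, with the slope replaced by $S_*^{n+1}=\tfrac12\|c(t_{n+1})\|_1^2$, is precisely the paper's equation \eqref{eq.4.15}, whose terms $I_1,\dots,I_7$ match your slope-difference term, $-\alpha_k\Phi^{n+1}$, $A_k(E^n-E(t_n))$, the factored dissipation differences (gradient part weighted by $\|\rho^{n+1}\|_\infty$, and the $c_t$ part controlled through the error equation \eqref{eq.4.3}), and $\tau R_k^{n+1}(E)$, while your lower bound on $S_*^{n+1}$ via the ODE for $\int_\Omega c$ is exactly the paper's use of the asymptotic mean-value formula together with $\int_\Omega c_0>0$. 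The only cosmetic caveat is that under Assumption~\ref{as.2.1} the energy truncation satisfies $\tau R_k^{n+1}(E)=O(\tau^{k+\frac12})$ rather than $O(\tau^{k+1})$, which still absorbs into $C\tau^k$ as you claim.
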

	\begin{proof}
		Subtracting \eqref{eq.3.5} from \eqref{eq.1.3} at $t^{n+1}$, we derive the follwing equation:
		\begin{align*}
			&\frac{d}{dt}E\big(\rho(t_{n+1}),c(t_{n+1})\big)-D_{k\tau}E\big(\rho(t_{n+1}),c(t_{n+1})\big)\\
			&+D_{k\tau}E\big(\rho(t_{n+1}),c(t_{n+1})\big)-D_{k\tau}E^{n+1}\\
			&+\int_{\Omega}\rho(t_{n+1})|\nabla\big(\log\rho(t_{n+1})-c(t_{n+1})\big)|^2-\rho^{n+1}|\nabla(\log\rho^{n+1}-\bar{c}^{n+1})|^2\diff \mathbf{x}\\
			&+\int_{\Omega}\big(c_t(t_{n+1})\big)^2-\big(D_{k\tau}c(t_{n+1})\big)^2\diff \mathbf{x}\notag\\
			&+\int_{\Omega}\big(D_{k\tau}c(t_{n+1})\big)^2-(D_{k\tau}\bar{c}^{n+1})^2\diff\mathbf{x}=0.
		\end{align*}
		Noting that there holds the following relationship:
		\begin{align*}
			&\tau D_{k\tau}E\big(\rho(t_{n+1}),c(t_{n+1})\big)-\tau D_{k\tau}E^{n+1}\\
			={}&\alpha_k\big(E(\rho(t_{n+1}),c(t_{n+1}))-E^{n+1}\big)+A_k\big(E(\rho(t_n),c(t_n))-E^n\big)\\
			={}&\frac{\alpha_k}2\int_\Omega(\bar{c}^{n+1})^2+|\nabla \bar{c}^{n+1}|^2-(c^{n+1})^2-|\nabla c^{n+1}|^2\diff\mathbf{x}\\
			&+\alpha_k\big(E(\rho(t_{n+1}),c(t_{n+1}))-E(\rho^{n+1},\bar{c}^{n+1})\big)+A_k\big(E(\rho(t_n),c(t_n))-E^n\big)\\
			={}&\frac{\alpha_k}2(1-\mu^{n+1})\int_\Omega(c(t_{n+1}))^2+|\nabla c(t_{n+1})|^2\diff\mathbf{x}\\
			&+\frac{\alpha_k}2(\mu^{n+1}-1)\int_\Omega(c(t_{n+1}))^2-(\bar{c}^{n+1})^2+|\nabla c(t_{n+1})|^2-|\nabla \bar{c}^{n+1}|^2\diff\mathbf{x}\\
			&+\alpha_k\big(E(\rho(t_{n+1}),c(t_{n+1}))-E(\rho^{n+1},\bar{c}^{n+1})\big)+A_k\big(E(\rho(t_n),c(t_n))-E^n\big).
		\end{align*}
		By incorporating this relationship into the previous equation, we get
		\begin{align}\label{eq.4.15}
			{}&\frac{\alpha_k}2(\mu^{n+1}-1)\int_\Omega\big(c(t_{n+1})\big)^2+|\nabla c(t_{n+1})|^2\diff\mathbf{x}\notag\\
			={}&\frac{\alpha_k}2(\mu^{n+1}-1)\int_\Omega\big(c(t_{n+1})\big)^2-(\bar{c}^{n+1})^2+|\nabla c(t_{n+1})|^2-|\nabla \bar{c}^{n+1}|^2\diff\mathbf{x}\notag\\
			&+\alpha_k\big(E(\rho(t_{n+1}),c(t_{n+1}))-E(\rho^{n+1},\bar{c}^{n+1})\big)\notag\\
			&+A_k\big(E(\rho(t_n),c(t_n))-E^n\big)\notag\\
			&+\tau\int_{\Omega}\rho(t_{n+1}) |\nabla\big(\log \rho(t_{n+1}) -c(t_{n+1})\big)|^2-\rho^{n+1}|\nabla(\log \rho^{n+1}-\bar{c}^{n+1})|^2\diff\mathbf{x}\notag\\
			&+\tau\int_{\Omega}\big(c_t(t_{n+1})\big)^2-\big(D_{k\tau}c(t_{n+1})\big)^2\diff \mathbf{x}\notag\\
			&+\tau\int_{\Omega}\big(D_{k\tau}c(t_{n+1})\big)^2-(D_{k\tau}\bar{c}^{n+1})^2\diff \mathbf{x}\notag\\
			&+\tau\Big(\frac{d}{dt}E\big(\rho(t_{n+1}),c(t_{n+1})\big)-D_{k\tau}E\big(\rho(t_{n+1}),c(t_{n+1})\big)\Big)\notag\\
			=:&\sum_{i=1}^{7}I_i.
		\end{align}
		Then, we utilize Hölder's inequality to establish upper bounds for the terms $I_1,\dots,I_7$. By employing Assumption \ref{as.2.1} and the identity
		\begin{align}
			a^2-b^2=2a(a-b)-(a-b)^2,\label{eq.4.16}
		\end{align}
		we derive $I_1$ as follows:
		\begin{align*}
			|I_1|\le{}&C|1-\mu^{n+1}|\int_\Omega|c(t_{n+1})\bar{e}_c^{n+1}|+|\nabla c(t_{n+1})\cdot\nabla\bar{e}_c^{n+1}|\diff\mathbf{x}\\
			&+\int_\Omega(\bar{e}_c^{n+1})^2+|\nabla \bar{e}_c^{n+1}|^2\diff\mathbf{x}\\
			\le&C|1-\mu^{n+1}|\big(\|\bar{e}_c^{n+1}\|_0+\|\nabla\bar{e}_c^{n+1}\|_0+\|\bar{e}_c^{n+1}\|_1^2\big).
		\end{align*}
		Based on Assumption \ref{as.2.1}, \eqref{eq.2.3c} and \eqref{eq.4.16}, we can derive the following inequality for $I_2$:
		\begin{align*}
			|I_2|={}&\alpha_k\bigg|\int_\Omega\Big(\big(\rho(t_{n+1})\log\rho(t_{n+1})-\rho^{n+1}\log\rho^{n+1}\big)-\big(\rho(t_{n+1})-\rho^{n+1}\big)\\
			{}&-\big(\rho(t_{n+1})c(t_{n+1})-\rho^{n+1}\bar{c}^{n+1}\big)+\frac12\big((c(t_{n+1}))^2-(\bar{c}^{n+1})^2\big)\\
			{}&+\frac12\big(|\nabla c(t_{n+1})|^2-|\nabla\bar{c}^{n+1}|^2\big)\Big)\diff \mathbf{x}\bigg|\\
			\le{}&C\Big(\|e_{\rho}^{n+1}\|_0\|u(t_{n+1})\|_0+\|\log\rho(t_{n+1})-\log\rho^{n}\|_0\|\rho^{n+1}\|_0\\
			{}&+\|e_{\rho}^{n+1}\|_0+\|e_\rho^{n+1}\|_0\|c(t_{n+1})\|_0+\|\rho^{n+1}\|_0\|\bar{e}_c^{n+1}\|_0\\
			&+\|c(t_{n+1})\|_0\|\bar{e}_c^{n+1}\|_0+\|\bar{e}_c^{n+1}\|_0^2+\|\nabla c(t_{n+1})\|_0\|\nabla\bar{e}_c^{n+1}\|_0+\|\nabla\bar{e}_c^{n+1}\|_0^2\Big)\\
			\le{}&C\Big(\big(\|\bar{e}_c^{n+1}\|_0+\|e_u^{n+1}\|_0+\max\{1/\lambda^{n+1},1\}|1-\lambda^{n+1}|\big)\|\rho^{n+1}\|_0\\
			&+\|e_\rho^{n+1}\|_0+\|\bar{e}_c^{n+1}\|_0+\|\nabla\bar{e}_c^{n+1}\|_0+\|\bar{e}_c^{n+1}\|_1^2\Big),
		\end{align*}
		where the final step relies on the inequality given by
		\begin{align}\label{eq.4.17}
			\|\log\rho(t_{n+1})-\log\rho^{n+1}\|_0={}&\|\log\rho(t_{n+1})-\log(\lambda^{n+1}\bar{\rho}^{n+1})\|_0\notag\\
			={}&\|u(t_{n+1})-u^{n+1}+\log1-\log\lambda^{n+1}\|_0\notag\\
			\le&\|e_u^{n+1}\|_0+\max\{1/\lambda^{n+1},1\}|1-\lambda^{n+1}|,
		\end{align}
		which is obtained by using \eqref{eq.2.3c}, \eqref{eq.3.3}, \eqref{eq.3.4} and Lagrange's mean value theorem. Here, we note that $\lambda^{n+1}>0$. This fact is established by integrating both sides of \eqref{eq.3.3} and substituting \eqref{eq.3.4} to get
		\begin{align*}
			\int_{\Omega}\rho^{n+1}\diff\mathbf{x}=\int_{\Omega}\lambda^{n+1}\bar{\rho}^{n+1}\diff\mathbf{x}=\int_{\Omega}\rho^n\diff\mathbf{x}.
		\end{align*}
		Next, by substituting $n=k-1,\cdots,N-1$ into the above equality, we derive
		\begin{align}\label{mass conservation}
			\int_{\Omega}\rho^{n+1}\diff \mathbf{x}=\int_{\Omega}\rho^{k-1}\diff \mathbf{x}=\int_{\Omega}\rho_0\diff \mathbf{x}>0.
		\end{align}
		Subsequently, from \eqref{eq.3.4}, we conclude that $\bar{\rho}^{n+1}>0$ and, consequently, $\lambda^{n+1}>0$. This ensures that the term involving $\lambda^{n+1}$ in inequality \eqref{eq.4.17} is well-defined. Similar to the analysis for $I_2$, it holds for $I_3$ that
		\begin{align*}
			|I_3|={}&\Big|A_k\big(E\big(\rho(t_n),c(t_n)\big)-E^n\big)\Big|\\
			\le{}&C\sum_{j=n+1-k}^{n}\Big(\big(\|\bar{e}_c^j\|_0+\|e_u^j\|_0+\max\{1/\lambda^j,1\}|1-\lambda^j|\big)\|\rho^j\|_0\\
			&+\|e_\rho^j\|_0+\|e_c^j\|_0+\|\nabla e_c^j\|_0+\|e_c^j\|_1^2\Big).
		\end{align*}
		Combining Assumption \ref{as.2.1}, \eqref{eq.2.2a}, \eqref{eq.4.16}, \eqref{eq.4.17}, we arrive at
		\begin{align*}
			|I_4|\le{}&\tau\int_{\Omega}\Big(|e_\rho^{n+1}|\big|\nabla\big(\log\rho(t_{n+1})-c(t_{n+1})\big)\big|^2\\
			&+|\rho^{n+1}|\big||\nabla\big(\log\rho(t_{n+1}) -c(t_{n+1})\big)|^2-|\nabla(\log\rho^{n+1}-\bar{c}^{n+1})|^2\big|\Big)\diff \mathbf{x}\\
			\le{}&C\tau\Big(\|e_\rho^{n+1}\|_0\|\nabla\big(u(t_{n+1})-c(t_{n+1})\big)\|_{L^4}^2\\
			&+\|\rho^{n+1}\|_{\infty}\|\nabla\big(u(t_{n+1})-c(t_{n+1})\big)\|_0\big(\|\nabla e_u^{n+1}\|_0+\|\nabla\bar{e}_c^{n+1}\|_0\big)\\
			&+\|\rho^{n+1}\|_{\infty}\big(\|\nabla e_u^{n+1}\|_0^2+\|\nabla\bar{e}_c^{n+1}\|_0^2\big)\Big)\\
			\le{}&C\tau\|\rho^{n+1}\|_{\infty}\big(\|\nabla e_u^{n+1}\|_0+\|\nabla\bar{e}_c^{n+1}\|_0+\|\nabla e_u^{n+1}\|_0^2+\|\nabla\bar{e}_c^{n+1}\|_0^2\big)+\|e_\rho^{n+1}\|_0.
		\end{align*}
		In terms of Assumption \ref{as.2.1}, \eqref{eq.4.6a} and \eqref{eq.4.16}, we obtain
		\begin{align}
			|I_5|={}&\tau\bigg|\int_{\Omega}\big(c_t(t_{n+1})\big)^2-\big(D_{k\tau}c(t_{n+1})\big)^2\diff\mathbf{x}\bigg|\notag\\
			\le{}&C\tau\big(\|c_t(t_{n+1})\|_0\|R_k^{n+1}(c)\|_0+\|R_k^{n+1}(c)\|_0^2\big)\le C\tau^k.\notag
		\end{align}
		For the term $I_6$, thanks to Assumption \ref{as.2.1}, \eqref{eq.4.3}, \eqref{eq.4.5}, \eqref{eq.4.6} and \eqref{eq.4.16}, we have
		\begin{align*}
			|I_6|={}&\tau\bigg|\int_{\Omega}\big(D_{k\tau}c(t_{n+1})\big)^2-(D_{k\tau}\bar{c}^{n+1})^2\diff \mathbf{x}\bigg|\\
			\le{}&C\tau\|D_{k\tau}c(t_{n+1})\|_0\|D_{k\tau}\bar{e}_c^{n+1}\|_0+C\tau\|D_{k\tau}\bar{e}_c^{n+1}\|_0^2\\
			\le{}&C\tau\big(\|c_t(t_{n+1})\|_0+\|R_k^{n+1}(c)\|_0\big)\cdot\\
			&\big(\|\varDelta\bar{e}_c^{n+1}\|_0+\|\bar{e}_c^{n+1}\|_0+\|B_k(e_\rho^n)\|_0+\|Q_k^{n+1}(\rho)\|_0+\|R_k^{n+1}(c)\|_0\big)\\
			&+C\tau\big(\|\varDelta\bar{e}_c^{n+1}\|_0+\|\bar{e}_c^{n+1}\|_0+\|B_k(e_\rho^n)\|_0+\|Q_k^{n+1}(\rho)\|_0+\|R_k^{n+1}(c)\|_0\big)^2\\
			\le&C\tau\big(\tau^{k-\frac12}+\|\varDelta\bar{e}_c^{n+1}\|_0+\|\bar{e}_c^{n+1}\|_0+\|B_k(e_\rho^n)\|_0\big)\\
			&+C\tau\big(\tau^{k-\frac12}+\|\varDelta\bar{e}_c^{n+1}\|_0+\|\bar{e}_c^{n+1}\|_0+\|B_k(e_\rho^n)\|_0\big)^2.
		\end{align*}
		The estimation of the last term $I_7$ is provided as:
		\begin{align}\label{eq.4.18}
			|I_7|\le{}&\tau\bigg|\int_\Omega(\rho\log\rho)_t(t_{n+1})-D_{k\tau}\big(\rho(t_{n+1})\log\rho(t_{n+1})\big)\diff\mathbf{x}\bigg|\notag\\
			&+\tau\bigg|\int_\Omega\rho_t(t_{n+1})-D_{k\tau}\rho(t_{n+1})\diff\mathbf{x}\bigg|\notag\\
			&+\tau\bigg|\int_\Omega(\rho c)_t(t_{n+1})-D_{k\tau}\big(\rho(t_{n+1})c(t_{n+1})\big)\diff\mathbf{x}\bigg|\notag\\ &+\frac\tau2\bigg|\int_\Omega\big((c)^2\big)_t(t_{n+1})-D_{k\tau}\big(c(t_{n+1})\big)^2\diff\mathbf{x}\bigg|\notag\\
			&+\frac\tau2\bigg|\int_\Omega\big(|\nabla c|^2\big)_t(t_{n+1})-D_{k\tau}|\nabla c(t_{n+1})|^2\diff\mathbf{x}\bigg|\\
			=&:\sum_{i=1}^{5}I_{7i}.
		\end{align}
		Applying Assumption~\ref{as.2.1} and \eqref{eq.4.6a} to the second term $I_{72}$, we have
		\begin{align*}
			I_{72}={}&\tau\bigg|\int_\Omega\rho_t(t_{n+1})-D_{k\tau}\rho(t_{n+1})\diff\mathbf{x}\bigg|\le C\|R_k^{n+1}(\rho)\|_0\\
			\le{}&C\tau^{k+\frac12}\bigg(\int_{t_{n+1-k}}^{t_{n+1}}\|\partial_t^{k+1}\rho\|_0^2\diff t\bigg)^{\frac12}\le C\tau^k.
		\end{align*}
		For the first term $I_{71}$, by Assumption~\ref{as.2.1}, \eqref{eq.2.2a}, \eqref{eq.2.4}, and \eqref{eq.4.5}, we get
		\begin{align*}
			I_{71}={}&\tau\bigg|\int_\Omega(\rho\log\rho)_t(t_{n+1})-D_{k\tau}\big(\rho(t_{n+1})\log\rho(t_{n+1})\big)\diff\mathbf{x}\bigg|\\
			={}&\bigg|\int_\Omega\sum_{j=1}^k\Big(r_j\int_{t_{n+1-j}}^{t_{n+1}} (t^{n+1-j}-t)^k \partial_t^{k+1}(\rho\log\rho)\diff t\Big)\diff\mathbf{x}\bigg|\\
			\le{}&C\tau^{k+\frac12}\bigg(\int_{t_{n+1-k}}^{t_{n+1}}\| \partial_t^{k+1}(\rho u)\|_0^2\diff t\bigg)^\frac12\\
			\le{}& C\tau^{k+\frac12}\bigg(\sum_{m=0}^{k+1}\int_{t_{n+1-k}}^{t_{n+1}} \|\partial_t^m\rho \cdot\partial_t^{k+1-m}u\|_0^2\diff t\bigg)^\frac12\\
			\le{}& C\tau^{k+\frac12}\bigg(\sum_{m=0}^{k+1}\int_{t_{n+1-k}}^{t_{n+1}} \|\partial_t^m\rho\|_{L^4}^4+\|\partial_t^mu\|_{L^4}^4\diff t\bigg)^\frac12\\
			\le{}& C\tau^{k+\frac12}\bigg(\sum_{m=0}^{k+1}\int_{t_{n+1-k}}^{t_{n+1}}  \|\partial_t^m\rho\|_0^4+\|\nabla\partial_t^m\rho\|_0^4+\|\partial_t^mu\|_0^4+\|\nabla\partial_t^mu\|_0^4\diff t\bigg)^\frac12\\
			\le{}& C\tau^k.
		\end{align*}
		Similar to the above estimate, the remaining terms in \eqref{eq.4.18} can be bounded in the same way. Therefore, we have
		\begin{align*}
			|I_{73}|\le{}&C\tau^{k+\frac12}\bigg(\sum_{m=0}^{k+1}\int_{t_{n+1-k}}^{t_{n+1}}  \|\partial_t^m\rho\|_0^4+\|\nabla\partial_t^m\rho\|_0^4+\|\partial_t^mc\|_0^4+\|\nabla\partial_t^mc\|_0^4\diff t\bigg)^\frac12\\
			\le{}& C\tau^k,\\
			|I_{74}|\le{}&C\tau^{k+\frac12}\bigg(\sum_{m=0}^{k+1}\int_{t_{n+1-k}}^{t_{n+1}}  \|\partial_t^mc\|_0^4+\|\nabla\partial_t^mc\|_0^4\diff t\bigg)^\frac12\le C\tau^k,\\
			|I_{75}|\le{}&C\tau^{k+\frac12}\bigg(\sum_{m=0}^{k+1}\int_{t_{n+1-k}}^{t_{n+1}}  \|\nabla\partial_t^mc\|_0^4+\|\varDelta\partial_t^mc\|_0^4\diff t\bigg)^\frac12\le C\tau^k.
		\end{align*}
		By combining $I_{71},\dots,I_{75}$, we obtain
		\begin{align*}
			|I_{7}|\le C\tau^k.
		\end{align*}
		Finally, due to the fact that the system \eqref{eq.1.1} satisfies the asymptotic property~(see \cite{bib37}):
		\begin{equation*}
			\int_{\Omega}c(\mathbf{x},t)\diff\mathbf{x}=e^{-\frac\alpha\varepsilon t}\bigg(\int_{\Omega}c(\mathbf{x},0)-\frac1\alpha\rho(\mathbf{x},0)\diff \mathbf{x}\bigg)+\frac1\alpha\int_{\Omega}\rho(\mathbf{x},0)\diff\mathbf{x},\label{eq.1.5}
		\end{equation*}
		we can derive
		\begin{equation*}
			\int_{\Omega}c(\mathbf{x},t)\diff \mathbf{x}\ge\min\left\{\frac1\alpha\int_{\Omega}\rho(\mathbf{x},0)\diff\mathbf{x},\int_{\Omega}c(\mathbf{x},0)\diff \mathbf{x}\right\}>0,
		\end{equation*}
		by noting that $c_0(\mathbf{x})$ and $\rho_0(\mathbf{x})>0$. Applying the above inequality to the term on the left-hand side of \eqref{eq.4.15}, we derive
		\begin{align*}
			\int_\Omega\big(c(t_{n+1})\big)^2+|\nabla c(t_{n+1})|^2\diff\mathbf{x}\ge &\frac{1}{|\Omega|}\bigg(\int_\Omega c(t_{n+1})\diff\mathbf{x}\bigg)^2>0,
		\end{align*}
		where $|\Omega|$ represents the measure of the domain $\Omega$. By substituting the above inequality and the estimation results of $I_1,\dots,I_7$ into \eqref{eq.4.15}, Lemma \ref{le.4.5} can be proved.
	\end{proof}
	\begin{lemma}\label{le.4.6}
		Under the Assumption \ref{as.2.1}, for any $n=k-1,\cdots,N-1$, we have
		\begin{align*}
			\|e_c^{n+1}\|_0\le{}& C\big(|1-\sqrt{\mu}^{n+1}|(1+\|\bar{e}_c^{n+1}\|_0)+\|\bar{e}_c^{n+1}\|_0\big),\\
			\|\nabla e_c^{n+1}\|_0\le{}& C\big(|1-\sqrt{\mu}^{n+1}|(1+\|\nabla\bar{e}_c^{n+1}\|_0)+\|\nabla\bar{e}_c^{n+1}\|_0\big),\\
			\|\varDelta e_c^{n+1}\|_0\le{}& C\big(|1-\sqrt{\mu}^{n+1}|(1+\|\varDelta\bar{e}_c^{n+1}\|_0)+\|\varDelta\bar{e}_c^{n+1}\|_0\big).
		\end{align*}
	\end{lemma}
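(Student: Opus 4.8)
The plan is to express the error $e_c^{n+1}$ directly in terms of $\bar{e}_c^{n+1}$ and the energy-correction factor $\mu^{n+1}$ by exploiting the simple algebraic relation \eqref{eq.3.6}. Since $c^{n+1}=\sqrt{\mu^{n+1}}\,\bar{c}^{n+1}$ and $\sqrt{\mu^{n+1}}$ is a spatial constant, I would first write
\begin{align*}
e_c^{n+1}=c(t_{n+1})-c^{n+1}=\big(c(t_{n+1})-\bar{c}^{n+1}\big)+\big(1-\sqrt{\mu^{n+1}}\big)\bar{c}^{n+1}=\bar{e}_c^{n+1}+\big(1-\sqrt{\mu^{n+1}}\big)\bar{c}^{n+1}.
\end{align*}
The only quantity not already controlled is $\bar{c}^{n+1}$, which I would replace using $\bar{c}^{n+1}=c(t_{n+1})-\bar{e}_c^{n+1}$, thereby splitting the correction term into a controllable exact-solution part and a residual error part.

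First I would take the $L^2$ norm and apply the triangle inequality, obtaining
\begin{align*}
\|e_c^{n+1}\|_0\le\|\bar{e}_c^{n+1}\|_0+|1-\sqrt{\mu^{n+1}}|\big(\|c(t_{n+1})\|_0+\|\bar{e}_c^{n+1}\|_0\big).
\end{align*}
Since Assumption \ref{as.2.1} provides $\|c(t_{n+1})\|_0\le\|c(t_{n+1})\|_2\le C$, the first claimed bound follows immediately after absorbing constants.

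For the gradient and Laplacian estimates, the key observation is that $\sqrt{\mu^{n+1}}$ is independent of $\mathbf{x}$, so the differential operators commute with this scalar factor and the very same decomposition survives after applying $\nabla$ or $\varDelta$. Thus $\nabla e_c^{n+1}=\nabla\bar{e}_c^{n+1}+(1-\sqrt{\mu^{n+1}})\nabla\bar{c}^{n+1}$, and analogously for $\varDelta$; I would again substitute $\nabla\bar{c}^{n+1}=\nabla c(t_{n+1})-\nabla\bar{e}_c^{n+1}$ (resp. $\varDelta\bar{c}^{n+1}=\varDelta c(t_{n+1})-\varDelta\bar{e}_c^{n+1}$). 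Invoking $\|\nabla c(t_{n+1})\|_0\le C$ and $\|\varDelta c(t_{n+1})\|_0\le\|c(t_{n+1})\|_2\le C$ from Assumption \ref{as.2.1} then yields the remaining two inequalities in identical fashion.

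This argument is essentially algebraic and presents no substantive analytical difficulty; the only point requiring care is that $\sqrt{\mu^{n+1}}$ be well-defined, i.e. $\mu^{n+1}>0$, which is guaranteed by the solvability of $\mu^{n+1}$ in $\mathbb{R}^+$ for sufficiently small $\tau$ established elsewhere in the analysis. The utility of the lemma lies in the structure of the bounds: each norm of $e_c^{n+1}$ is dominated by the corresponding norm of $\bar{e}_c^{n+1}$ together with $|1-\sqrt{\mu^{n+1}}|$, which later feeds into the induction once $|1-\mu^{n+1}|$—and hence $|1-\sqrt{\mu^{n+1}}|=|1-\mu^{n+1}|/|1+\sqrt{\mu^{n+1}}|$—is shown to be $O(\tau^k)$ via Lemma \ref{le.4.5}.
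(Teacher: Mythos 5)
Your proposal is correct and uses essentially the same argument as the paper: the paper's one-line proof rests on exactly your decomposition $e_c^{n+1}=(1-\sqrt{\mu^{n+1}})c(t_{n+1})+(\sqrt{\mu^{n+1}}-1)\bar{e}_c^{n+1}+\bar{e}_c^{n+1}$, followed by the triangle inequality, the bound $\|c(t_{n+1})\|_2\le C$ from Assumption \ref{as.2.1}, and the observation that the constant $\sqrt{\mu^{n+1}}$ commutes with $\nabla$ and $\varDelta$. You merely arrive at the identity in two steps (split off $(1-\sqrt{\mu^{n+1}})\bar{c}^{n+1}$, then substitute $\bar{c}^{n+1}=c(t_{n+1})-\bar{e}_c^{n+1}$) and spell out details the paper leaves implicit.
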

	\begin{proof}
		Recalling \eqref{eq.3.6}, we obtain
		\begin{align*}
			e_c^{n+1}={}&c(t_{n+1})-c^{n+1}=c(t_{n+1})-\sqrt{\mu}^{n+1}\bar{c}^{n+1}\\
			={}&(1-\sqrt{\mu}^{n+1})c(t_{n+1})+(\sqrt{\mu}^{n+1}-1)\bar{e}_c^{n+1}+\bar{e}_c^{n+1},
		\end{align*}
		which implies Lemma \ref{le.4.6}.
	\end{proof}
	Now, based on the results deduced in the above, we can easily demonstrate Theorem \ref{th.3.1}.
	\begin{proof}[Proof of Theorem \ref{th.3.1}]
		We will use mathematical induction to carry out the proof.
		
		(\Rmnum{1}). case of $n=k-1$:
		
		(\Rmnum{1}-1). Recalling Lemma \ref{le.4.2} and Assumption \ref{as.3.1}, we get
		\begin{subequations}\label{eq.4.19}
			\begin{align}		
				\|\bar{e}_c^k\|_0^2+\tau\|\nabla\bar{e}_c^k\|_0^2\le{}& C\tau^{2k}+C\tau\|\bar{e}_c^{k-1}\|_1^2+C\tau\sum_{q=0}^{k-1}\|e_\rho^q\|_0^2\le C\tau^{2k},\label{eq.4.19a}\\
				\|\nabla\bar{e}_c^k\|_0^2+\tau\|\varDelta\bar{e}_c^k\|_0^2\le{}& C\tau^{2k}+C\tau\|\nabla\bar{e}_c^{k-1}\|_1^2+C\tau\sum_{q=0}^{k-1}\|\nabla e_\rho^q\|_0^2\le C\tau^{2k},\label{eq.4.19b}
			\end{align}
		\end{subequations}
		which combining with the triangle inequality and Assumption \ref{as.2.1} yields
		\begin{align}
			\|\bar{c}^k\|_2^2\le\|c(t_k)\|_2^2+\|\bar{e}_c^k\|_2^2\le C.\label{eq.4.20}
		\end{align}
		
		(\Rmnum{1}-2). Using Lemma \ref{le.4.3}, \eqref{eq.4.19} and Assumption \ref{as.3.1}, we get
		\begin{align*}
			\|e_u^k\|_0^2+\tau\|\nabla e_u^k\|_0^2
			\le{}&C\tau^{2k}+C\tau\sum_{q=0}^{k-1}\|\nabla e_u^q\|_0^2+C\tau\sum_{q=0}^{k-1}\|\nabla e_c^q\|_0^2+C\tau\|\nabla\bar{e}_c^k\|_0^2\\
			{}&+C\tau\big(1+\|\nabla B_k(u^{k-1})\|_0^2\|\varDelta B_k(u^{k-1})\|_0^2\\
			&+\|\nabla B_k(c^{k-1})\|_0^2\|\varDelta B_k(c^{k-1})\|_0^2\big)\big(\|e_u^k\|_0^2+\|e_u^{k-1}\|_0^2\big)\\
			\le{}& C\tau^{2k}+C_1\tau\|e_u^k\|_0^2,\\
			\|\nabla e_u^k\|_0^2+\tau\|\varDelta e_u^k\|_0^2\le{}& C\tau^{2k}+C_2\tau\|\nabla e_u^k\|_0^2.
		\end{align*}	
		There exists a sufficiently small, positive constant $\tau_1$ for which $1-C_i\tau\ge\frac12$ $(i=1,2)$ holds if $\tau\le\tau_1$. When this is combined with the above inequalities, it implies
		\begin{subequations}\label{eq.4.21}
			\begin{align}
				\|e_u^k\|_0^2+\tau\|\nabla e_u^k\|_0^2\le C\tau^{2k},\label{eq.4.21a}\\
				\|\nabla e_u^k\|_0^2+\tau\|\varDelta e_u^k\|_0^2\le C\tau^{2k}.\label{eq.4.21b}
			\end{align}
		\end{subequations}
		Similar to \eqref{eq.4.20}, it is valid that
		\begin{align}
			\|u^k\|_2^2\le \|u(t_k)\|_2^2+\|e_u^k\|_2^2\le C.\label{eq.4.22}
		\end{align}
		
		(\Rmnum{1}-3). Based on Lemma \ref{le.4.4}, \eqref{eq.2.2}, \eqref{eq.4.21} and \eqref{eq.4.22}, Assumption~\ref{as.2.1}, we arrive at
		\begin{align}
			\|\bar{e}_\rho^k\|_0^2+\|\nabla\bar{e}_\rho^k\|_0^2+\|e_\rho^k\|_0^2+\|\nabla e_\rho^k\|_0^2+|1-\lambda^k|^2\le C\tau^{2k},\label{eq.4.23}
		\end{align}
		when $\tau\le\tau_2$ for some small positive constant $\tau_2$. Using the triangle inequality, we further obtain $\|\rho^k\|_1^2\le C$. It follows from \eqref{eq.2.2}, \eqref{eq.3.3}, \eqref{eq.3.4}, \eqref{eq.4.22} and \eqref{eq.4.23} that
		\begin{align*}
			\|\varDelta\rho^k\|_0^2={}&\|\varDelta(\lambda^k\bar{\rho}^k)\|_0^2=\|\lambda^k\exp(u^k)(\nabla u^k\cdot\nabla u^k+\varDelta u^k)\|_0^2\\
			\le{}& C\exp(2\|u^k\|_{\infty})(\|\nabla u^k\|_{L^4}^4+\|\varDelta u^k\|_0^2)\le C.
		\end{align*}
		Combining the above two inequalities yields
		\begin{align}
			\|\rho^k\|_2^2\le C.\label{eq.4.24}
		\end{align}
		
		(\Rmnum{1}-4). Applying Lemma \ref{le.4.5}, \eqref{eq.2.2}, \eqref{eq.4.19}, \eqref{eq.4.21a}, \eqref{eq.4.23}, \eqref{eq.4.24} and Assumption~\ref{as.3.1}, we have
		\begin{align}
			|1-\mu^k|\le C\tau^k+C_3\tau^k|1-\mu^k|.\notag
		\end{align}
		There exists a sufficiently small, positive constant $\tau_3$ for which $1-C_3\tau\ge\frac12$ holds whenever $\tau\le\tau_3$. This implies
		\begin{align*}
			|1-\mu^k|\le C\tau^k.
		\end{align*}
		Consequently, we can deduce from this inequality that when $\tau\le\tau_4$, where $\tau_4$ is a small positive constant such that $\mu^{k}\ge\frac12$ holds, we have
		\begin{align}
			|1-\sqrt{\mu^k}|\le C|1-\mu^k| \le C\tau^k.\label{eq.4.25}
		\end{align}
		
		(\Rmnum{1}-5). By combining Lemma \ref{le.4.6} with \eqref{eq.4.19} and \eqref{eq.4.25}, we arrive at
		\begin{align*}
			\|e_c^k\|_0^2+\|\nabla e_c^k\|_0^2+\tau\|\varDelta e_c^k\|_0^2\le C\tau^{2k}.
		\end{align*}
		Similar to the derivation of \eqref{eq.4.20}, we have
		\begin{align}
			\|c^k\|_2^2\le C.\notag
		\end{align}
		Therefore, \eqref{eq.3.8}-\eqref{eq.3.14} are valid when $n=k-1$.
		
		(\Rmnum{2}). Assuming that \eqref{eq.3.8}-\eqref{eq.3.14} hold true for $k-1\le n\le m-1$, where $k\le m\le N-1$, we will prove that \eqref{eq.3.8}-\eqref{eq.3.14} are also valid for $n = m$.
		
		(\Rmnum{2}-1). Using Lemma \ref{le.4.2} and the above assumption, we have
		\begin{align*}
			\|\bar{e}_c^{m+1}\|_0^2+\tau\sum_{q=k-1}^m\|\nabla\bar{e}_c^{q+1}\|_0^2
			\le {}&C\tau^{2k},\\
			\|\nabla\bar{e}_c^{m+1}\|_0^2+\tau\sum_{q=k-1}^m\|\varDelta\bar{e}_c^{q+1}\|_0^2\le{}& C\tau^{2k},
		\end{align*}
		and
		\begin{equation}		
			\|\bar{c}^{m+1}\|_2^2\le\|c(t_{m+1})\|_2^2+\|\bar{e}_c^{m+1}\|_2^2\le C.\notag
		\end{equation}
		
		(\Rmnum{2}-2). Using Lemma \ref{le.4.3}, the above inequalities, and the given assumption, we have
		\begin{align*}				
			\|e_u^{m+1}\|_0^2+\tau\sum_{q=k-1}^m\|\nabla e_u^{q+1}\|_0^2\le{}& C\tau^{2k}+C\tau\sum_{q=0}^m \|\nabla e_u^q\|_0^2\\
			&+C\tau\sum_{q=0}^m \|e_c^q\|_0^2+C\tau\sum_{q=k-1}^m \|\nabla\bar{e}_c^{q+1}\|_0^2\\
			&+C\tau\sum_{q=k-1}^m\big(1+\|\nabla B_k(u^q)\|_0^2\|\varDelta B_k(u^q)\|_0^2\\
			&+\|\nabla B_k(c^q)\|_0^2\|\varDelta B_k(c^q)\|_0^2\big)\big(\|e_u^{q+1}\|_0^2+\|e_u^q\|_0^2\big)\\
			\le{}& C\tau^{2k}+C\tau\sum_{q=k-1}^m\|e_u^{q+1}\|_0^2.
		\end{align*}
		Using the Gronwall lemma to the inequality above, the following result is obtained:
		\begin{align}				
			\|e_u^{m+1}\|_0^2+\tau\sum_{q=k-1}^m\|\nabla e_u^{q+1}\|_0^2\le C\tau^{2k}.\notag
		\end{align}
		Similarly, there holds
		\begin{align*}	
			\|\nabla e_u^{m+1}\|_0^2+\tau\sum_{q=k-1}^m\|\varDelta e_u^{q+1}\|_0^2\le C\tau^{2k}.
		\end{align*}
		Then we can get
		\begin{align*}	
			\|u^{m+1}\|_2^2\le \|u(t_{m+1})\|_2^2+\|e_u^{m+1}\|_2^2\le C.
		\end{align*}
		
		(\Rmnum{2}-3). Lemma \ref{le.4.4}, \eqref{eq.2.2}, Assumption \ref{as.2.1} and the above inequalities indicate that
		\begin{align*}	
			\|\bar{e}_\rho^{m+1}\|_0^2+\|\nabla\bar{e}_\rho^{m+1}\|_0^2+\|e_\rho^{m+1}\|_0^2+\|\nabla e_\rho^{m+1}\|_0^2+|1-\lambda^{m+1}|^2\le C\tau^{2k},
		\end{align*}
		when $\tau\le\tau_5$ for some small positive constant $\tau_5$.
		
		(\Rmnum{2}-4). Applying Lemma \ref{le.4.5}, \eqref{eq.2.2}, the above estimates, and the given assumption, we get
		\begin{align*}
			|1-\mu^{m+1}|\le C\tau^k,
		\end{align*}
		when $\tau\le\tau_6$ for some small positive constant $\tau_6$. Therefore, there exists another small positive constant $\tau_7$ such that $\mu^{m+1}\ge\frac12$ whenever $\tau\le\tau_7$. Combining this with the above inequality, we deduce that
		\begin{align*}
			|1-\sqrt{\mu^{m+1}}|^2\le{}&C\tau^{2k}.
		\end{align*}
		
		(\Rmnum{2}-5). Utilizing Lemma \ref{le.4.6} and the above inequalities, we can easily obtain
		\begin{align*}
			\|e_c^{m+1}\|_0^2+\|\nabla e_c^{m+1}\|_0^2+\tau\|\varDelta e_c^{m+1}\|_0^2\le& C\tau^{2k} \quad\text{and}\quad \|c^{m+1}\|_2^2\le C.
		\end{align*}
		
		The inequalities derived above demonstrate that \eqref{eq.3.8}-\eqref{eq.3.14} are also valid when $n=m$. In conclusion, we set $\tau^*=\min\{\tau_1,\cdots,\tau_7\}$ and combine the results from (\Rmnum{1}) and (\Rmnum{2}) to derive Theorem \ref{th.3.1} for all $\tau\le\tau^*$.
	\end{proof}
	\section{Numerical Experiments}\label{sec.5}
	This section provides numerical examples to verify the convergence rate of the constructed schemes \eqref{eq.3.1}-\eqref{eq.3.6} and confirm their properties of mass conservation, positivity preservation, and original energy-law preservation. For all computations below, the spatial domain is discretized using the finite element method with $P1$ element. Unless otherwise specified in the subsequent text, the parameters are set to $\alpha=\beta=\gamma=\varepsilon=1$.

	\textbf{Example 1.} We first verify the accuracy of these numerical schemes \eqref{eq.3.1}-\eqref{eq.3.6}. Consider the model \eqref{eq.1.1} with $\Omega=(-0.05,0.05)^2$ in the presence of external forcing, where the exact solutions are provided as
	\begin{align}
		c(\mathbf{x},t)={}&\sin(10\pi x)\sin(10\pi y)\sin(t)+1.1,\notag\\
		\rho(\mathbf{x},t)={}&\frac1{2\pi^2+1}\sin(10\pi x)\sin(10\pi y)\sin(t)+1.1.\notag
	\end{align}
	And the mesh size is given by $h=\tau^{k}$. Collecting numerical results at $T=1$ in Figure \ref{fig1} and Figure \ref{fig2}, we observe that all schemes achieve the predicted convergence order of $O(\tau^k)$ for all approximation solutions. These results are highly consistent with the theoretical predictions analysis in Theorem \ref{th.3.1}.
	
	\textbf{Example 2.} Then, we test the  physical
	properties preservation of the proposed scheme. Let $\Omega=(-5,5)^2$ and and consider the initial conditions given by
	\begin{align*}
		c(\mathbf{x},0)=e^{-\frac{x^2+y^2}2},\quad \rho(\mathbf{x},0)=4e^{-(x^2+y^2)},
	\end{align*}
	Taking $T=10$, $\alpha=6$, $\tau=0.001$, and $h=0.025$, we utilize the second-order scheme for temporal discretization. Figure \ref{fig3} displays the simulation results regarding the minimum values of $\rho$, cell mass, and energy. The observations indicate that the cell mass remains conserved, the positivity of $\rho$ is maintained, and the numerical energy exhibits dissipative behavior. Moreover, the numerical solutions presented in Figure \ref{fig4} align well with those in \cite{bib34}. These results align well with the previously derived theoretical predictions.
	
	\textbf{Example 3.} In the last example, we consider the case in which the solution will experience
	blow-up in finite-time. Let $\Omega=(-0.5, 0.5)^2$ with the following initial conditions \cite{bib25}:
	\begin{equation*}
		c(\mathbf{x}, 0)=420e^{-42(x^2+y^2)},\quad \rho(\mathbf{x}, 0)=840e^{-84(x^2+y^2)}.
	\end{equation*}
	It is obviously that  the initial mass of cells exceeds the critical value $8\pi$ (see \cite{bib12}) in this case. Setting $\tau=5\times10^{-6}$ and a spatial mesh size of $h=0.01$, we compare the proposed numerical scheme \eqref{eq.3.1}-\eqref{eq.3.6} ($k=2$) with the classical BDF$2$ scheme for the original formulation \eqref{eq.1.1}. Figure \ref{fig5} displays the one-dimensional profile of $\rho^N$ at $y = 0$ and $T=1\times10^{-4}$. It is evident from the figure that the cell density $\rho^N$ in the classical BDF$2$ scheme has negative values, whereas the proposed scheme maintains the solution strictly non-negative across the entire domain. The evolutions of cell mass and energy, computed by the new scheme, are presented in Figure \ref{fig6}, revealing that mass remains conserved while energy gradually decreases over time. Additionally, Figure \ref{fig7} presents the numerical solutions. These results show that the new proposed scheme still ensures the physical
	properties preservation under this limit state, which confirms the feasibility and efficiency of the proposed scheme again.
	
	\section{Conclusions}\label{sec.6}
	In this article, we first convert the Keller-Segel equation into its equivalent form through a logarithmic transformation. Then, by incorporating a recovery technique and an EPC method, we develop high-order temporal discrete schemes for solving the Keller-Segel equation using the BDF$k$ method. These schemes are not only mass conservative, positivity preserving, but also original energy-law preserving. Through a rigorous error analysis, the optimal convergence rate for all relevant functions under some realistic assumptions are derived in Theorem \ref{th.3.1}. Numerical examples have confirmed the correctness of the theoretical results and demonstrated that the solutions maintain the three properties mentioned above. However, this work focuses solely on the error analysis of the  temporal semi-discrete schemes and establishes their optimal convergence order. For the spatial discretization, we plan to employ the block-centered finite difference method described in~\cite{bib39}. A complete error analysis of the fully discrete scheme will be a focal point of our subsequent research. Furthermore, the Keller-Segel equation and more complex chemotaxis models possess a variety of additional properties. Designing numerical algorithms that can preserve as many physical properties of the continuous models as possible, while being applicable to a broader range of models, and deriving optimal error estimates for such schemes, constitutes a highly valuable research direction. This is not only one of the core objectives of this article but also a key focus for future research.
	\addcontentsline{toc}{section}{References}
	\bibliographystyle{plain}
	\bibliography{refs.bib}
	
	\begin{figure}[htbp]
		\centering
		\begin{subfigure}{0.4\textwidth}
			\includegraphics[width=1\textwidth]{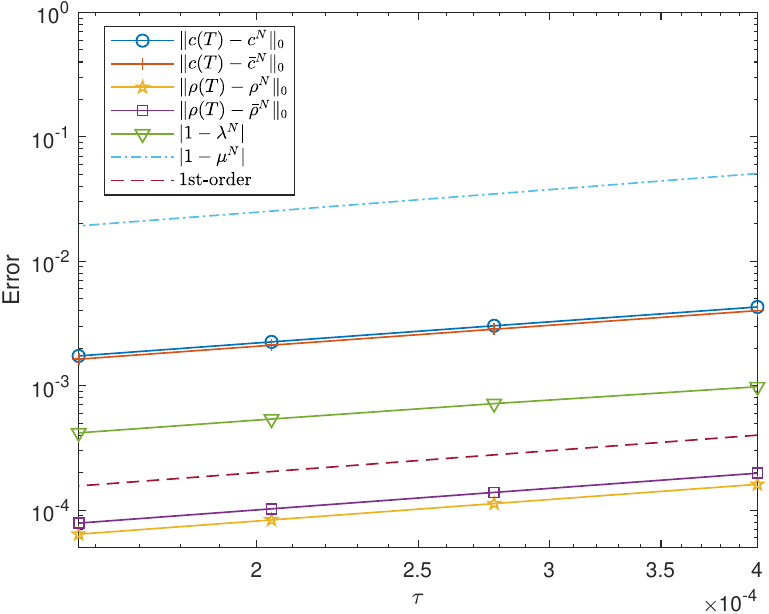}
			\caption{First-order scheme}
		\end{subfigure}
		\begin{subfigure}{0.4\textwidth}
			\includegraphics[width=1\textwidth]{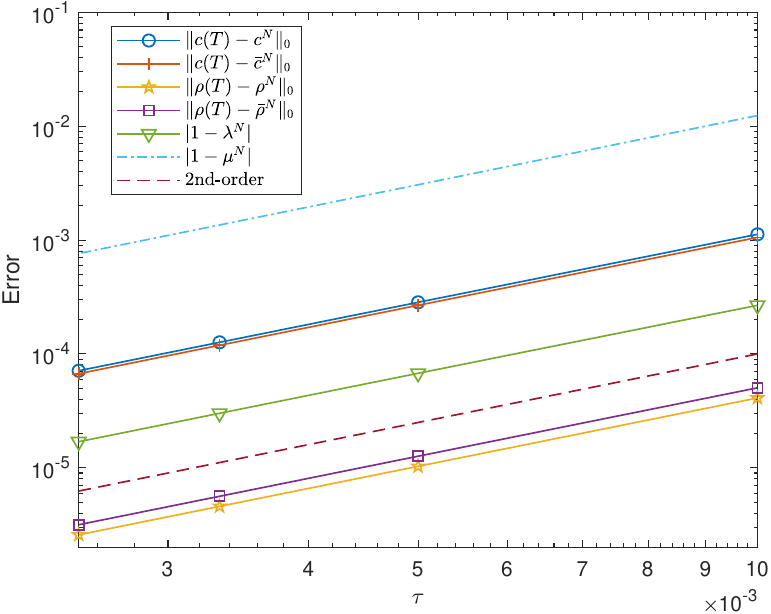}
			\caption{Second-order scheme}
		\end{subfigure}
		\begin{subfigure}{0.4\textwidth}
			\includegraphics[width=1\textwidth]{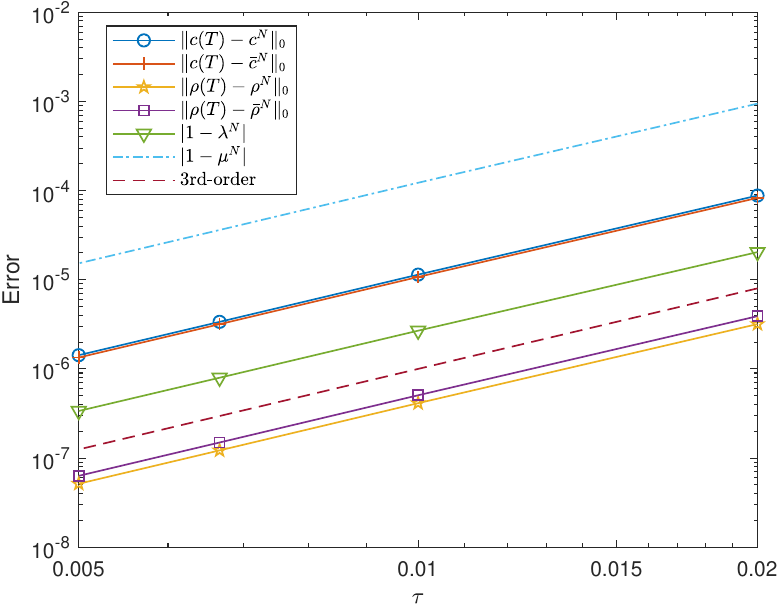}
			\caption{Third-order scheme}
		\end{subfigure}
		\begin{subfigure}{0.4\textwidth}
			\includegraphics[width=1\textwidth]{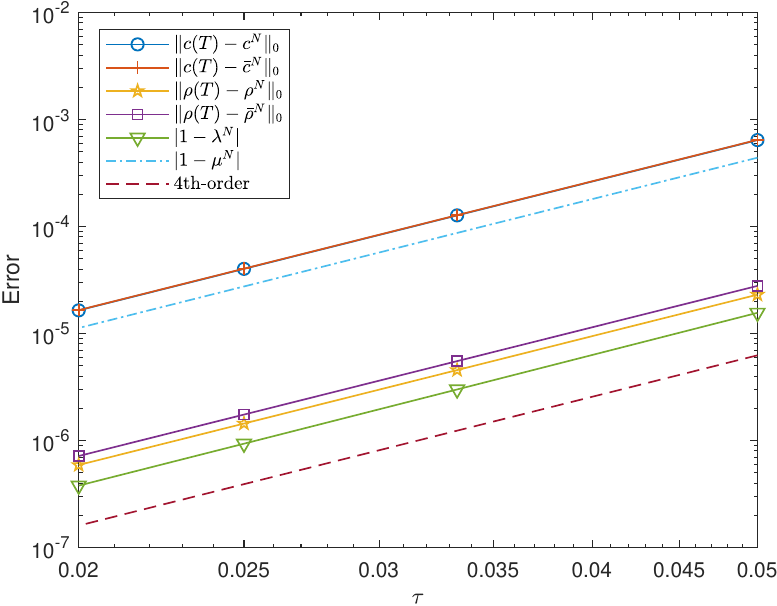}
			\caption{Fourth-order scheme}
		\end{subfigure}
		\begin{subfigure}{0.4\textwidth}
			\includegraphics[width=1\textwidth]{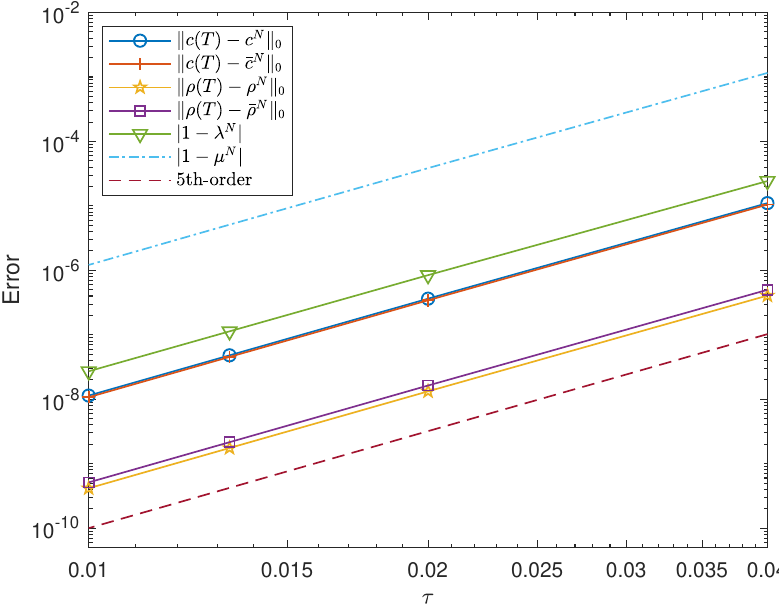}
			\caption{Fifth-order scheme}
		\end{subfigure}
		\caption{Convergence rates in  $L^2$-norm of the $k$th-order schemes.}\label{fig1}
	\end{figure}
	
	\begin{figure}
		\centering
		\begin{subfigure}{0.4\textwidth}
			\includegraphics[width=1\textwidth]{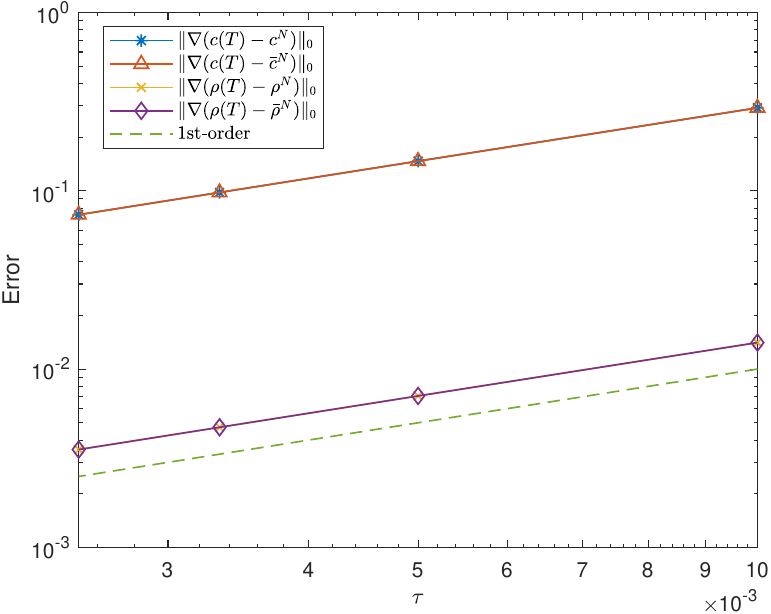}
			\caption{First-order scheme}
		\end{subfigure}
		\begin{subfigure}{0.4\textwidth}
			\includegraphics[width=1\textwidth]{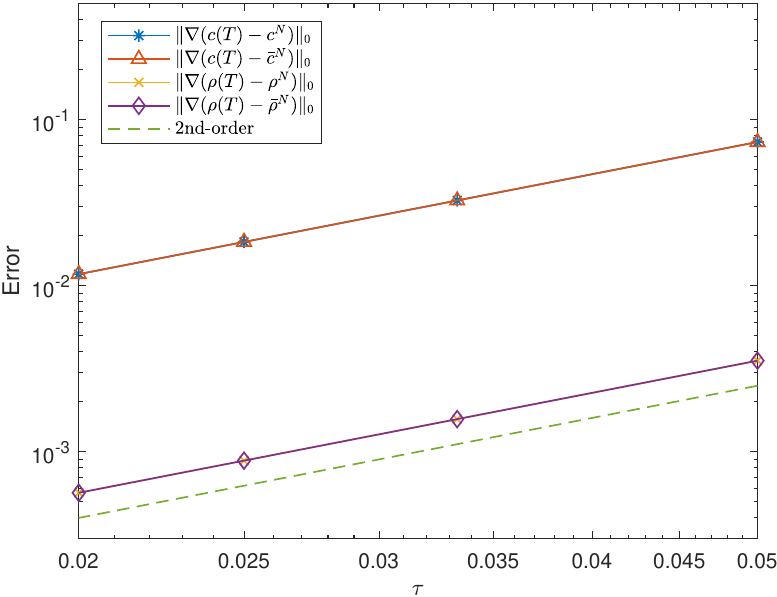}
			\caption{Second-order scheme}
		\end{subfigure}
		\begin{subfigure}{0.4\textwidth}
			\includegraphics[width=1\textwidth]{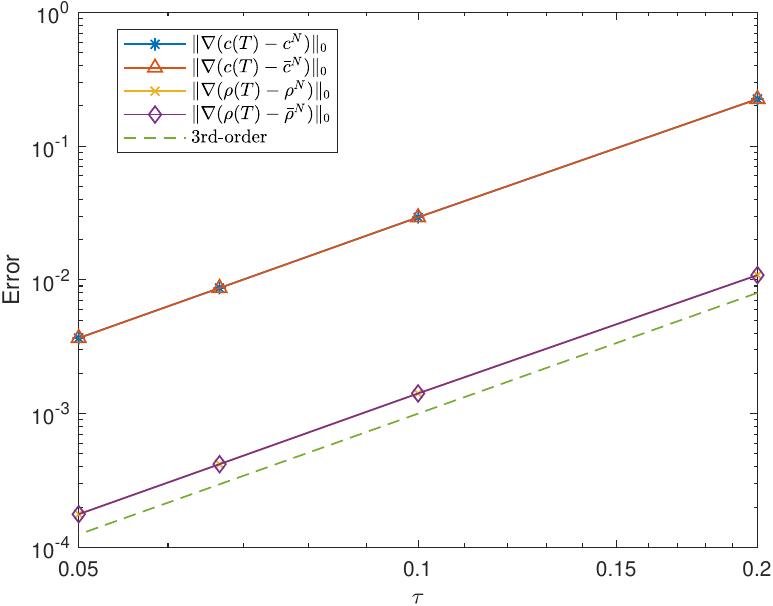}
			\caption{Third-order scheme}
		\end{subfigure}
		\begin{subfigure}{0.4\textwidth}
			\includegraphics[width=1\textwidth]{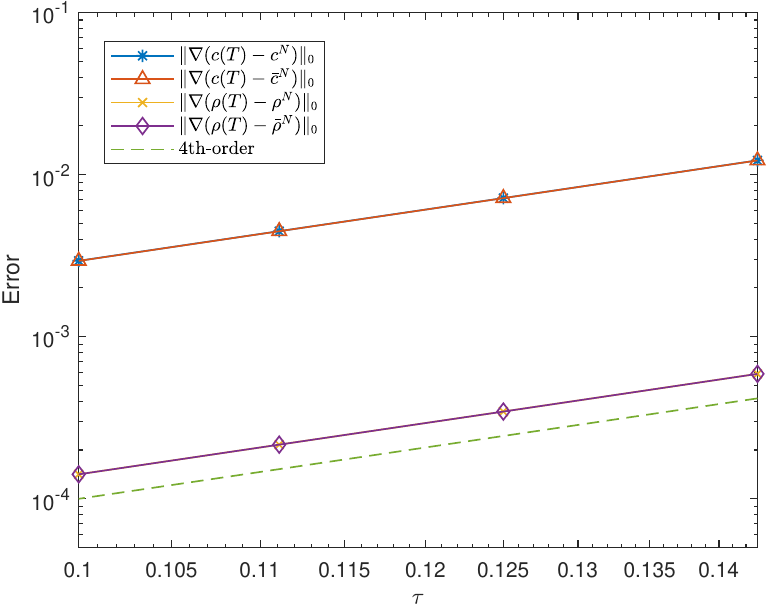}
			\caption{Fourth-order scheme}
		\end{subfigure}
		\begin{subfigure}{0.4\textwidth}
			\includegraphics[width=1\textwidth]{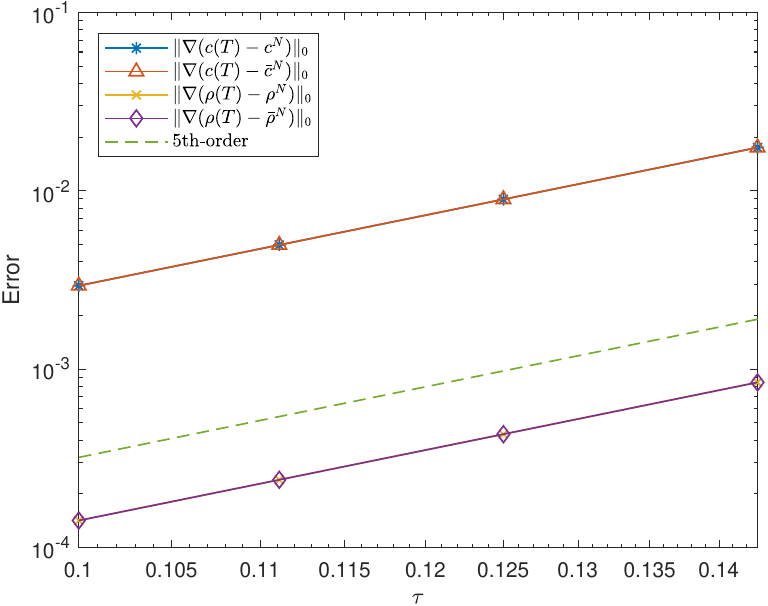}
			\caption{Fifth-order scheme}
		\end{subfigure}
		\caption{Convergence rates in  $H^1$-seminorm of the $k$th-order schemes.}\label{fig2}
	\end{figure}
	
	\begin{figure}
		\centering
		\begin{subfigure}{0.4\textwidth}
			\includegraphics[width=1\textwidth]{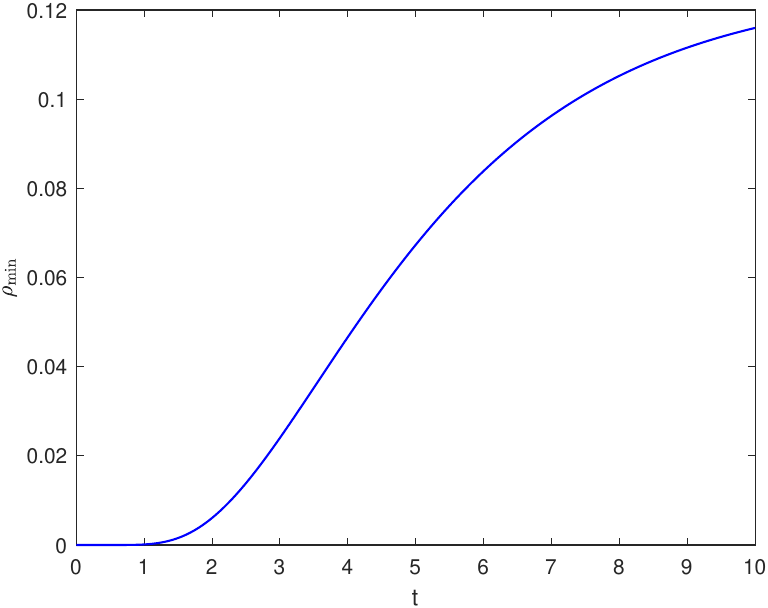}
			\caption{Minimum of $\rho^{n+1}$}
		\end{subfigure}
		\begin{subfigure}{0.4\textwidth}
			\includegraphics[width=1\textwidth]{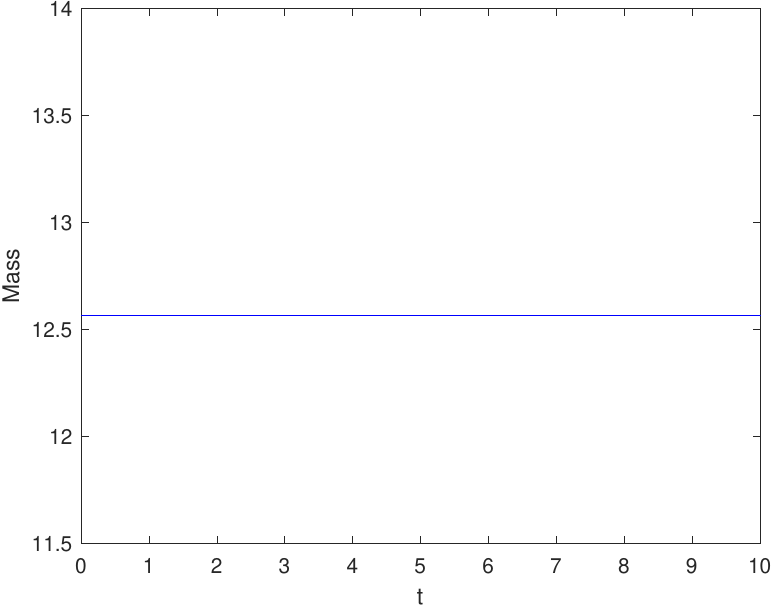}
			\caption{Evolution of cell mass}
		\end{subfigure}
		\begin{subfigure}{0.4\textwidth}
			\includegraphics[width=1\textwidth]{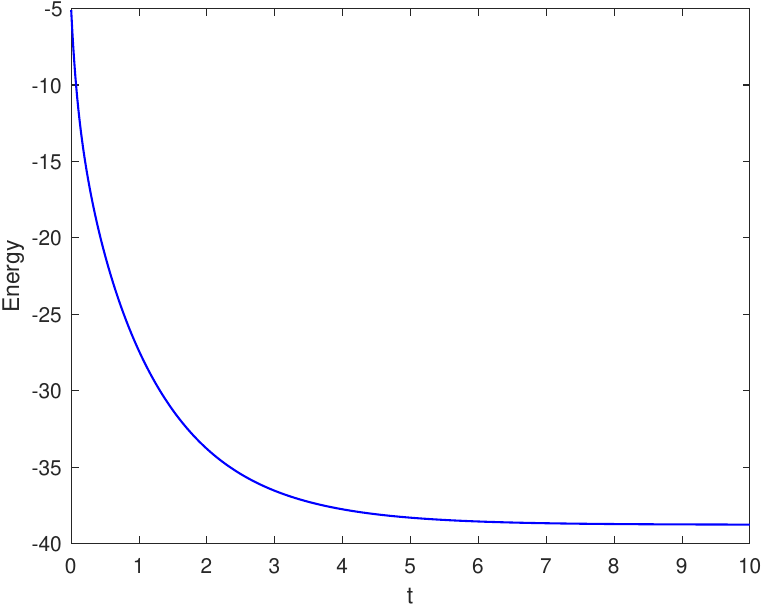}
			\caption{Evolution of numerical energy $E^{n+1}$}
		\end{subfigure}
		\caption{Simulations of the density, mass and energy (Example 2).}\label{fig3}
	\end{figure}
	
	\begin{figure}
		\centering
		\begin{subfigure}{0.4\textwidth}
			\includegraphics[width=1\textwidth]{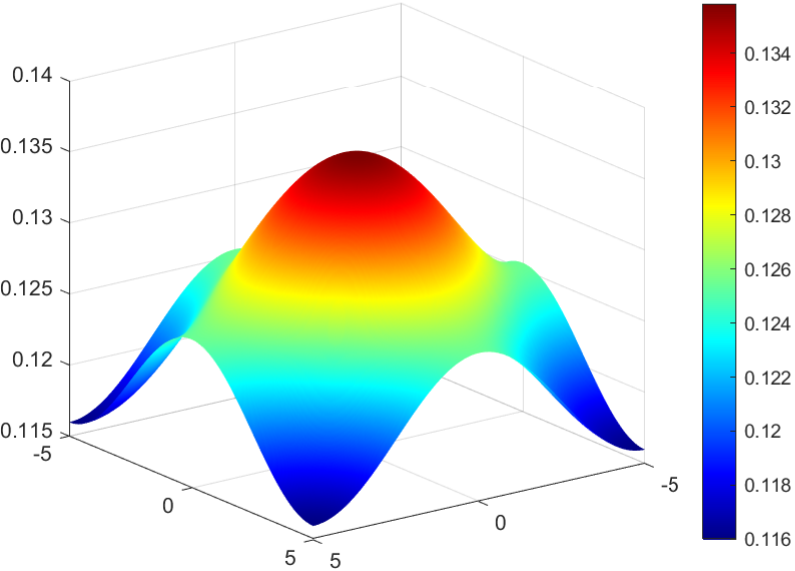}
			\caption{$\rho^{N}$}
		\end{subfigure}
		\begin{subfigure}{0.4\textwidth}
			\includegraphics[width=1\textwidth]{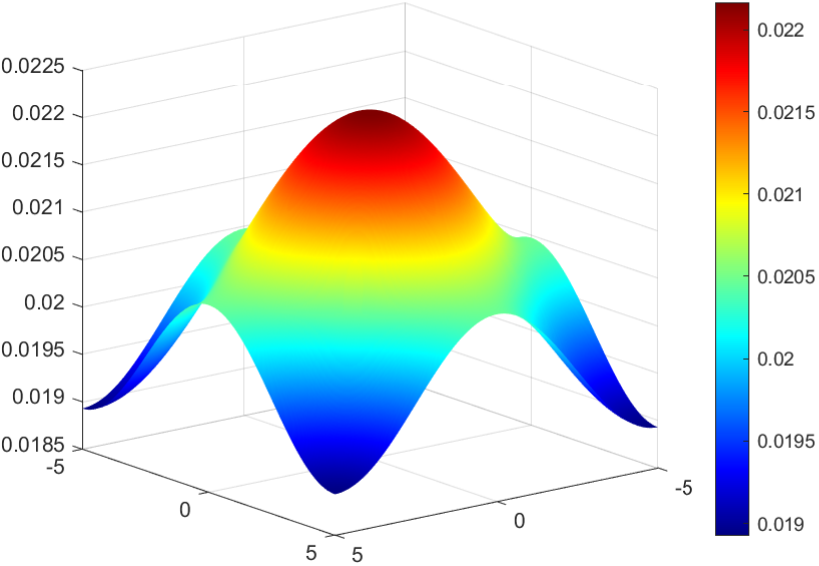}
			\caption{$c^{N}$}
		\end{subfigure}
		\caption{Numerical approximations of $\rho(t_{N})$ and $c(t_{N})$ (Example 2).}\label{fig4}
	\end{figure}
	
	\begin{figure}
		\centering
		\includegraphics[width=0.5\textwidth]{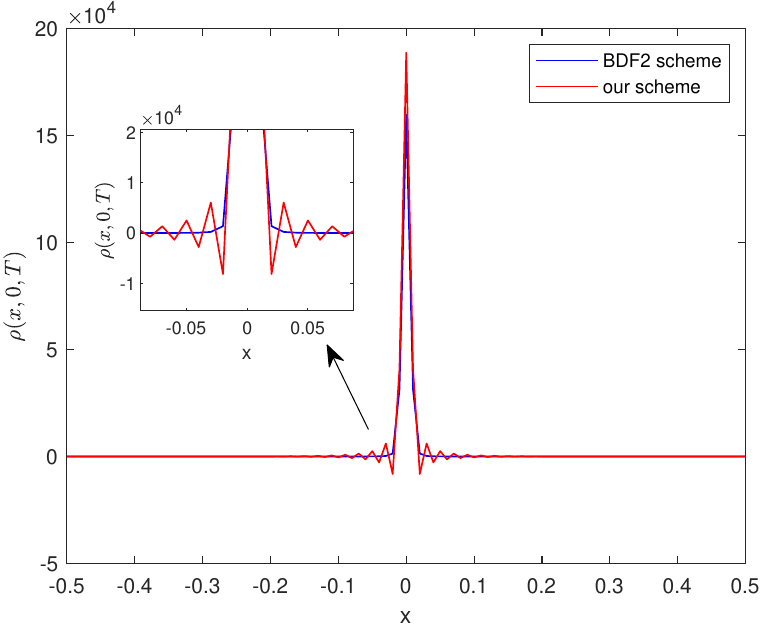}
		\caption{The cross section of $\rho(x, 0, T)$ with $T=1\times10^{-4}$ (Example 3).}\label{fig5}
	\end{figure}
	
	\begin{figure}
		\centering
		\begin{subfigure}{0.4\textwidth}
			\includegraphics[width=1\textwidth]{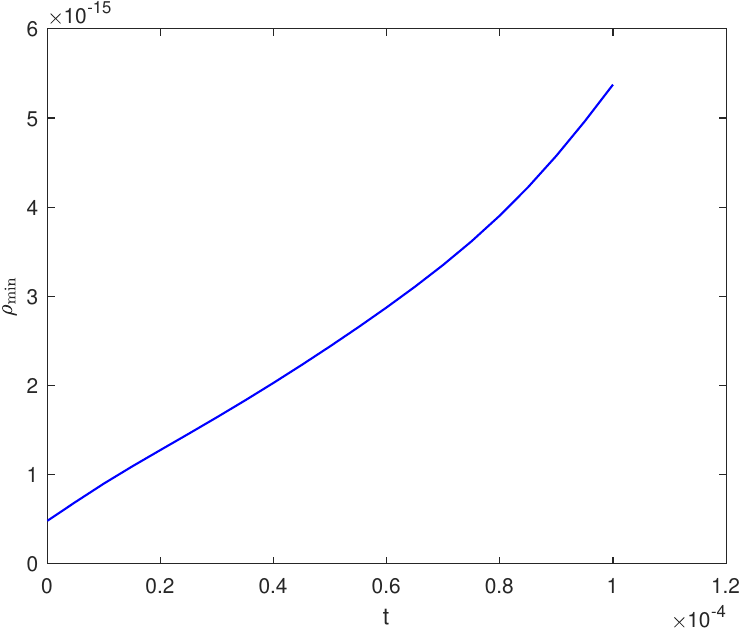}
			\caption{Minimum of $\rho^{n+1}$}
		\end{subfigure}
		\begin{subfigure}{0.4\textwidth}
			\includegraphics[height=0.83\textwidth]{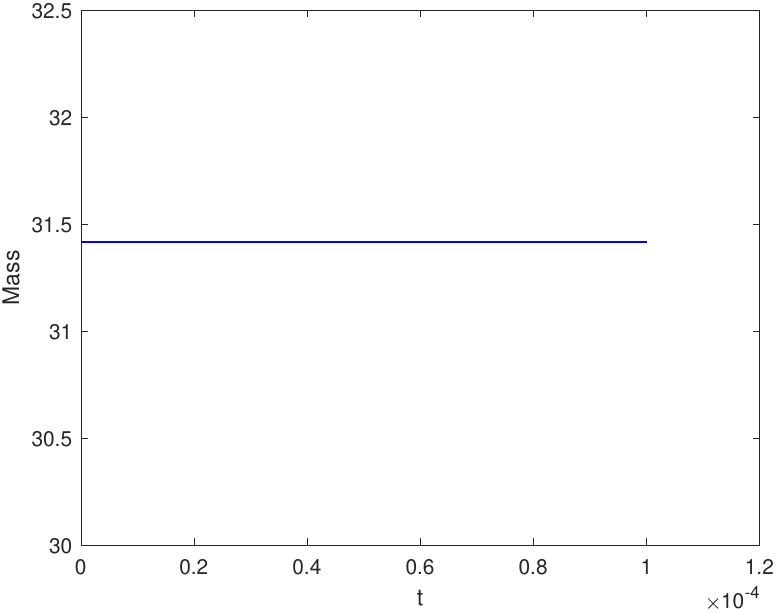}
			\caption{Evolution of cell mass}
		\end{subfigure}
		\begin{subfigure}{0.4\textwidth}
			\includegraphics[width=1\textwidth]{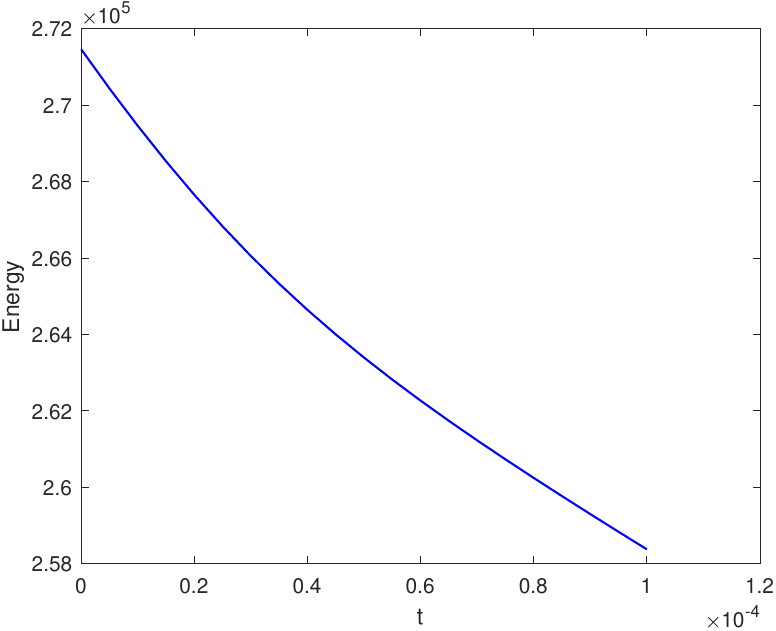}
			\caption{Evolution of numerical energy $E^{n+1}$}
		\end{subfigure}
		\caption{Simulations of the density, mass and energy (Example 3).}\label{fig6}
	\end{figure}
	
	\begin{figure}
		\centering
		\begin{subfigure}{0.4\textwidth}
			\includegraphics[width=1\textwidth]{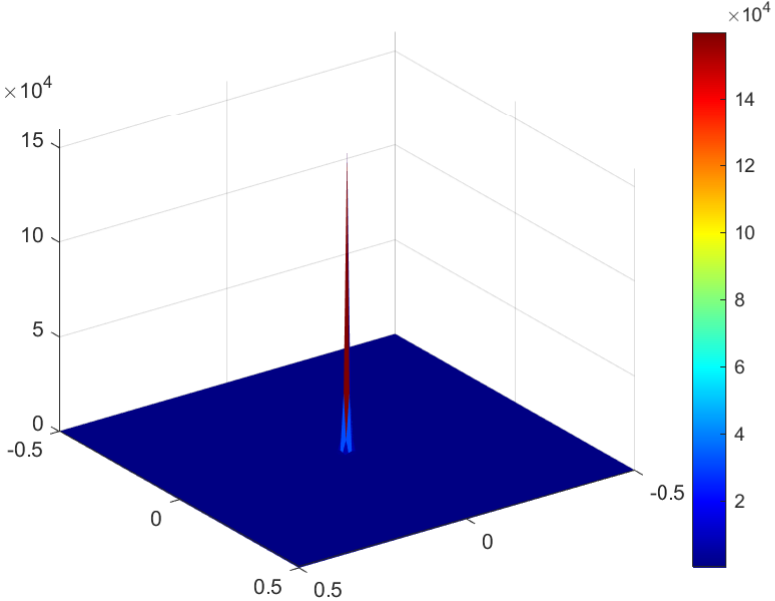}
			\caption{$\rho^{N}$}
		\end{subfigure}
		\begin{subfigure}{0.4\textwidth}
			\includegraphics[width=1\textwidth]{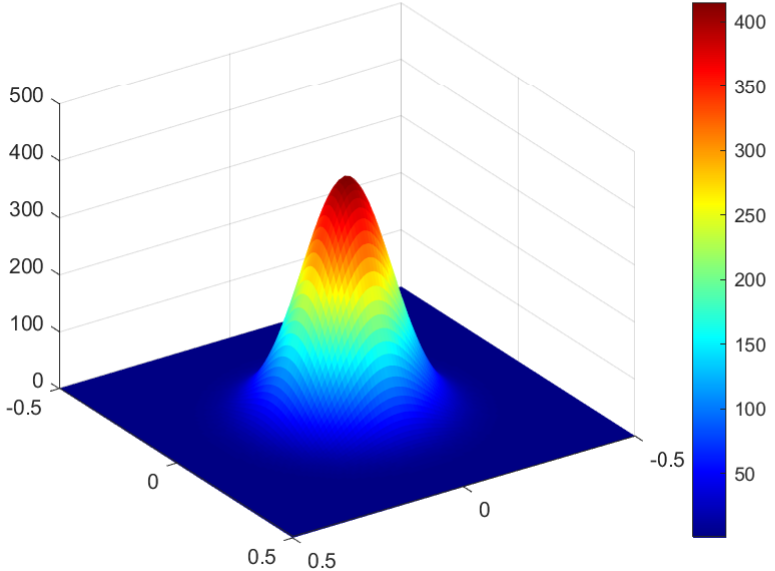}
			\caption{$c^{N}$}
		\end{subfigure}
		\caption{Numerical approximations of $\rho(t_N)$ and $c(t_N)$ (Example 3).}\label{fig7}
	\end{figure}
\end{document}